\documentclass[a4paper, 10pt]{article}
\usepackage{mathrsfs,amsmath,amsthm}
\usepackage{graphicx, color, subfigure}
\usepackage{array}
\arraycolsep=1pt
\usepackage{cases}
\makeatletter
\def\th@plain{%
  \upshape 
}
\makeatother

\makeatletter
\renewenvironment{proof}[1][\proofname]{\par
  \pushQED{\qed}%
  \normalfont \topsep6\p@\@plus6\p@\relax
  \trivlist
  \item[\hskip\labelsep
        \bfseries
    #1\@addpunct{.}]\ignorespaces
}{%
  \popQED\endtrivlist\@endpefalse
}
\makeatother

\newtheorem{theorem}{Theorem}
\numberwithin{theorem}{section}
\newtheorem{lemma}{Lemma}
\newtheorem{corollary}{Corollary}

\newtheorem*{conjecture*}{Conjecture}

\newtheorem{claim}{Claim}

\newtheorem{observation}{Observation}

\theoremstyle{definition}

\usepackage[
  centering,  
  headheight=50pt,
  headsep=12pt,
  footskip=12pt,
  footnotesep=24pt plus 2pt minus 12pt]{geometry}
\setlength{\parskip}{0pt}

\usepackage[T1]{fontenc}
\usepackage[varg]{txfonts}

\usepackage{enumerate}
\usepackage[square, numbers, sort&compress]{natbib}

\ifx\pdfoutput\undefined
 \usepackage[dvipdfm,%
  bookmarks=true,%
  bookmarksnumbered=true, 
  bookmarksopen=true, 
  plainpages=false,%
  pdfpagelabels,%
  colorlinks=true, 
  linkcolor=black, 
  citecolor=black,%
  hyperindex=true,
  urlcolor=black,
  pdfborder=001]{hyperref}
\else
 \usepackage[pdftex,%
  bookmarks=true,%
  bookmarksnumbered=true, 
  bookmarksopen=true, 
  plainpages=false,%
  pdfpagelabels,%
  colorlinks=true, 
  linkcolor=black, 
  citecolor=black,%
  anchorcolor=green,
  urlcolor= blue,
  breaklinks=true，
  hyperindex=true,
  pdfborder=001]{hyperref}
\fi
\hypersetup{%
  pdfstartview=FitH, 
  pdfauthor={Tao Wang}}

\newcommand{\etal}{et~al.\ }
\newcommand{\ie}{i.e.,\ }


\def\int(#1){\mathrm{int}(#1)}
\def\ext(#1){\mathrm{ext}(#1)}
\def\Int(#1){\mathrm{Int}(#1)}
\def\Ext(#1){\mathrm{Ext}(#1)}

\DeclareMathOperator {\diam}{diam}


\graphicspath{{figures/}}
\numberwithin{figure}{section}
\numberwithin{equation}{section}

\newtheorem*{asseration}{Asseration}
\newtheorem*{Murty-Simon}{Murty-Simon Conjecture}
\newtheorem{remark}{Remark}
\begin{document}%
\title{On Murty-Simon Conjecture II}
\author{Tao Wang\unskip\textsuperscript{a,}\footnote{{\tt Corresponding
author: wangtao@henu.edu.cn}}, \ \  Ping Wang\unskip\textsuperscript{b},
\ \ Qinglin Yu\unskip\textsuperscript{c}\\[.5em]
{\small \textsuperscript{a}\unskip Institute of Applied Mathematics}\\
{\small College of Mathematics and Information Science}\\
{\small Henan University, Kaifeng, 475004, P. R. China}\\
{\small \textsuperscript{b}\unskip Department of Mathematics, Statistics and Computer Science}\\
{\small St. Francis Xavier University, Antigonish, NS, Canada}\\
{\small \textsuperscript{c}\unskip Department of Mathematics and Statistics}\\
{\small Thompson Rivers University, Kamloops, BC, Canada}
}
\date{}
\maketitle

\begin{abstract}%
A graph is diameter two edge-critical if its diameter is two and the deletion of any edge increases the diameter. Murty and Simon conjectured that the number of edges in a diameter two edge-critical graph on $n$ vertices is at most $\left \lfloor \frac{n^{2}}{4} \right \rfloor$ and the extremal graph is the complete bipartite graph $K_{\left\lfloor
\frac{n}{2} \right\rfloor, \left\lceil \frac{n}{2} \right\rceil}$. In the series papers \cite{MR2818869, MR2811636, MR2852589}, the Murty-Simon Conjecture stated by Haynes \etal is
not the original conjecture, indeed, it is only for the diameter two edge-critical graphs of even
order. In this paper, we completely prove the Murty-Simon Conjecture for the graphs whose
complements have vertex connectivity $\ell$, where $\ell = 1, 2, 3$; and for the graphs whose complements have an independent vertex cut of cardinality at least three.

Keywords: Murty-Simon Conjecture; diameter two edge critical graph; total domination edge critical graph
\end{abstract}

\section{Introduction}
All graphs considered in this paper are simple. Let $G$ be a graph with vertex set $V(G)$ and edge set $E(G)$. The
{\em neighborhood} of a vertex $v$ in a graph $G$, denoted by
$N_{G}(v)$, is the set of all the vertices adjacent to the vertex
$v$, \ie $N_{G}(v) = \{u \in V(G) \mid uv \in E(G)\}$, and the {\em
closed neighborhood} of a vertex $v$ in $G$, denoted by $N_{G}[v]$,
is defined by $N_{G}[v] = N_{G}(v) \cup \{v\}$. For a subset $S
\subseteq V$, the {\em neighborhood of the set $S$} in $G$ is the
set of all vertices adjacent to vertices in $S$, this set is denoted
by $N_{G}(S)$, and the {\em closed neighborhood of $S$} by $N_{G}[S]
= N_{G}(S) \cup S$. Let $S$ and $T$ be two subsets (not necessarily
disjoint) of $V(G)$, $[S, T]$ denotes the set of edges of $G$
with one end in $S$ and the other in $T$, and $e_{G}(S, T)=|[S,
T]|$. If every vertex in $S$ is adjacent to each vertex in $T$, then we say that $[S, T]$ is full. If $S \subseteq V(G)$, and $u, v$ are two nonadjacent vertices in $G$, then we say that $xy$ is a {\em missing edge} in $S$ (rather than ``$uv$ is a missing edge in $G[S]$'').

The {\em complement} $G^{c}$ of a simple graph $G = (V, E)$ is the
simple graph with vertex set $V$, two vertices are adjacent in
$G^{c}$ if and only if they are not adjacent in $G$.

Given a graph $G$ and two vertices $u$ and $v$ in it, the {\em distance}
between $u$ and $v$ in $G$, denoted by $d_{G}(u, v)$, is the length
of a shortest $u$-$v$ path in $G$; if there is no path connecting
$u$ and $v$, we define $d_{G}(u, v) = \infty$. The {\em diameter} of
a graph $G$, denoted by $\diam(G)$, is the maximum distance between any two vertices of $G$. Clearly, $\diam(G) = \infty$ if and only if $G$ is disconnected.

A subset $S \subseteq V$ is called a {\em dominating set} ({\bf DS})
of a graph $G$ if every vertex $v \in V$ is an element of $S$ or is
adjacent to a vertex in $S$, that is, $N_{G}[S] = V$. The {\em
domination number} of $G$, denoted by $\gamma(G)$, is the minimum
cardinality of a dominating set in $G$.

A subset $S \subseteq V$ is a {\em total dominating set},
abbreviated {\bf TDS}, of $G$ if every vertex in $V$ is adjacent to
a vertex in $S$, that is $N_{G}(S) = V$. Every graph without
isolated vertices has a TDS, since $V$ is a trivial TDS. The
{\em total domination number} of a graph $G$, denoted by
$\gamma_{t}(G)$, is the minimum cardinality of a TDS in $G$. For the graph with isolated vertices, we define its total domination number to be $\infty$. Total
domination in graphs was introduced by Cockayne, Dawes, and Hedetniemi
\cite{MR0584887}.

\begin{observation}\label{TDSCAP}%
Let $G$ be a graph, for any vertex $v$ and a TDS $S$ in $G$. Then $S
\cap N_{G}[v] \neq \emptyset$.
\end{observation}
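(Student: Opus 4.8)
The plan is to unwind the definition of a total dominating set directly, since the statement is an immediate consequence of it. Recall that $S$ being a TDS means $N_{G}(S) = V(G)$; in particular the vertex $v$ itself belongs to $N_{G}(S)$. I would translate this into the existence of a witness: there must be some vertex $u \in S$ with $v \in N_{G}(u)$, equivalently $uv \in E(G)$.

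Next I would observe that $uv \in E(G)$ says precisely that $u \in N_{G}(v)$, and since $N_{G}(v) \subseteq N_{G}[v] = N_{G}(v) \cup \{v\}$, we get $u \in N_{G}[v]$. Combining this with $u \in S$ yields $u \in S \cap N_{G}[v]$, so the intersection is nonempty, as required.

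The only point that deserves a moment's care is the distinction between the open and closed neighborhood: the witness $u$ lies in the \emph{open} neighborhood $N_{G}(v)$, since it is adjacent to $v$ and hence distinct from $v$ in a simple graph, and it is the inclusion $N_{G}(v) \subseteq N_{G}[v]$ that makes the statement go through even though $S$ need not contain $v$ itself. There is no real obstacle here; the argument is a one-line deduction from the definition of total domination, requiring no case analysis or auxiliary construction.
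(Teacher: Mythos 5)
Your proof is correct and is exactly the intended argument: the paper states this as an observation without proof precisely because it is the one-line consequence of the definition $N_{G}(S) = V$ that you spell out. Note that your argument in fact establishes the slightly stronger conclusion $S \cap N_{G}(v) \neq \emptyset$ with the open neighborhood, which is consistent with how the observation is applied later in the paper.
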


For two vertex subsets $X$ and $Y$, we say that {\em $X$ dominates $Y$} ({\em totally dominates $Y$}, respectively) if $Y \subseteq N_{G}[X]$ ($Y \subseteq N_{G}(X)$, respectively); sometimes, we also say that {\em $Y$ is dominated by $X$} ({\em totally dominated by $X$}, respectively).

For three vertices $u, v, w\in V(G)$, the symbol $uv\rightarrow w$ means that $\{u, v\}$ dominates $G - w$,
but $uw \notin E(G)$, $vw \notin E(G)$ and $uv \in E(G)$.

A graph $G$ is said to be {\em diameter-$d$ edge-critical} if $\diam(G) = d$
and $\diam(G - e) > \diam(G)$ for any edge $e \in E(G)$. Gliviak \cite{MR0412035} proved the impossibility of characterization of diameter-$d$ edge-critical graphs by finite extension or by forbidden subgraphs. Plesn\'{i}k
\cite{Plesn'ik1975} observed that all known minimal graphs of
diameter two on $n$ vertices have no more than $\left \lfloor
\frac{n^{2}}{4} \right \rfloor $ edges. Independently, Murty and
Simon (see \cite{MR0548621}) conjectured the following:
\begin{Murty-Simon}\label{MurtySimonConjecture}
If $G$ is a diameter-$2$ edge-critical graph on $n$ vertices, then $|E(G)| \leq \left
\lfloor \frac{n^{2}}{4} \right \rfloor$. Moreover, equality holds if
and only if $G$ is the complete bipartite graph $K_{\left\lfloor
\frac{n}{2} \right\rfloor, \left\lceil \frac{n}{2} \right\rceil}$.
\end{Murty-Simon}

Let $G$ be a diameter-$2$ edge-critical graph on $n$ vertices. Plesn\'{i}k
\cite{Plesn'ik1975} proved that $|E(G)| < 3n(n-1)/8$. Caccetta and H\"{a}ggkvist \cite{MR0548621} obtained that $|E(G)| < 0.27n^{2}$. Fan \cite{MR0915948} proved  the first part of the Murty-Simon Conjecture for $n \leq 24$ and for $n=26$; and 
\[
|E(G)| < \frac{1}{4}n^{2} + (n^{2}-16.2n + 56)/320 < 0.2532 n^{2}\]
for $n \geq 25$.
F\"{u}redi \cite{MR1147806} proved the Murty-Simon Conjecture for $n > n_{0}$, where $n_{0}$ is not larger than a tower of $2$'s of height about $10^{14}$.

A graph is {\em total domination edge critical} if the addition of any edge decrease the total domination number. If $G$ is total domination edge critical with $\gamma_{t}(G) = k$, then we say that $G$ is a {\em $k$-$\gamma_{t}$-edge critical graph}. Haynes \etal \cite{MR1658130} proved that the addition of an edge to a graph without isolated vertices can decrease the total domination number by at most two. A graph $G$ with the property that $\gamma_{t}(G) = k$ and $\gamma_{t}(G + e) = k-2$ for every missing edge $e$ in $G$ is called a {\em $k$-supercritical} graph.

\begin{theorem}[Hanson and Wang \cite{MR1976247}]\label{DominationEdgeVsDiameterTwo}
A nontrivial graph $G$ is dominated by two adjacent vertices if and only if the
diameter of $G^{c}$ is greater than two.
\end{theorem}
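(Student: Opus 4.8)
The plan is to establish both implications simultaneously through a single pairwise translation between domination in $G$ and distance in $G^{c}$. The key fact I would isolate first is that, for any two distinct vertices $u, v$, the pair $\{u, v\}$ dominates $G$ with $uv \in E(G)$ if and only if $d_{G^{c}}(u, v) \geq 3$, where an infinite distance is read as being $\geq 3$. Once this equivalence is in hand, the theorem is immediate: by definition of the diameter, $\diam(G^{c}) > 2$ is exactly the assertion that some pair of vertices lies at distance at least $3$ in $G^{c}$, and the equivalence identifies such a pair with a dominating pair of adjacent vertices of $G$.

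To prove the equivalence I would unwind the two conditions through complementation. The condition $uv \in E(G)$ is the same as $uv \notin E(G^{c})$, i.e.\ $d_{G^{c}}(u, v) \neq 1$. Next, $\{u, v\}$ \emph{fails} to dominate $G$ exactly when some vertex $w \notin \{u, v\}$ is adjacent in $G$ to neither $u$ nor $v$; complementing, this says $w$ is adjacent in $G^{c}$ to both $u$ and $v$, i.e.\ $w$ is a common neighbor of $u$ and $v$ in $G^{c}$. Hence $\{u, v\}$ dominates $G$ if and only if $u$ and $v$ have no common neighbor in $G^{c}$, which rules out $d_{G^{c}}(u, v) = 2$; combined with $u \neq v$ (ruling out distance $0$) and the adjacency condition $d_{G^{c}}(u, v) \neq 1$, the two requirements hold together precisely when $d_{G^{c}}(u, v) \geq 3$.

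For the forward direction of the theorem, if $u, v$ are adjacent and dominate $G$, then the equivalence gives $d_{G^{c}}(u, v) \geq 3$, so $\diam(G^{c}) \geq 3 > 2$. Conversely, if $\diam(G^{c}) > 2$, then, since all finite distances are integers, there exist vertices $u, v$ with $d_{G^{c}}(u, v) \geq 3$; the equivalence then returns a dominating pair of adjacent vertices, using that $d_{G^{c}}(u, v) \geq 3$ forces $u \neq v$ and $uv \notin E(G^{c})$, hence $uv \in E(G)$.

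Honestly, there is no substantial obstacle here: the entire content is the bookkeeping that ``distance at least $3$ in $G^{c}$'' simultaneously encodes ``adjacent in $G$'' (ruling out distance $1$) and ``no vertex escapes both endpoints'' (ruling out distance $2$). The only points requiring a little care are the degenerate readings—treating the disconnected case $d_{G^{c}}(u, v) = \infty$ uniformly as distance $\geq 3$, and confirming that domination here is taken with respect to the closed neighborhood, so that $u$ and $v$ dominate themselves. Neither affects the argument, and the nontriviality of $G$ is only needed so that the distance in $G^{c}$ is meaningfully defined.
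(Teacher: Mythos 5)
Your proof is correct. Note, however, that the paper itself contains no proof of this statement: it is quoted verbatim as a theorem of Hanson and Wang \cite{MR1976247}, so there is no internal argument to compare against. Your complementation argument --- isolating the equivalence that, for distinct $u,v$, the pair $\{u,v\}$ dominates $G$ with $uv\in E(G)$ if and only if $d_{G^{c}}(u,v)\geq 3$ (reading $\infty$ as $\geq 3$), since adjacency in $G$ rules out $d_{G^{c}}(u,v)=1$ and domination rules out a common $G^{c}$-neighbor and hence $d_{G^{c}}(u,v)=2$ --- is the standard and essentially unique way to prove this, and your handling of the degenerate cases (disconnected $G^{c}$, closed neighborhoods) is sound.
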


\begin{corollary}%
A graph $G$ is diameter-$2$ edge-critical on $n$ vertices if and only if the total domination number of $G^{c}$ is greater than two but the addition of any edge in $G^{c}$ decrease the total domination number to be two, that is, $G^{c}$ is $K_{1} \cup K_{n-1}$ or $3$-$\gamma_{t}$-edge critical or $4$-supercritical.
\end{corollary}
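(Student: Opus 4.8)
The plan is to use the Hanson--Wang characterization (\autoref{DominationEdgeVsDiameterTwo}) as a dictionary between the diameter of a graph and the total domination number of its complement, and then translate each defining property of a diameter-$2$ edge-critical graph into a statement about $\gamma_{t}(G^{c})$. First I would record the reformulation I actually need: since any total dominating set of size two must consist of two adjacent vertices (each of the two vertices needs a neighbour inside the set), the phrase ``$F$ is dominated by two adjacent vertices'' is synonymous with ``$\gamma_{t}(F) = 2$''. Applying \autoref{DominationEdgeVsDiameterTwo} to $F = G^{c}$ and using $(G^{c})^{c} = G$ then gives the key equivalence $\gamma_{t}(G^{c}) = 2 \iff \diam(G) > 2$, and negating both sides (recall $\gamma_{t} \geq 2$ whenever finite) yields $\gamma_{t}(G^{c}) > 2 \iff \diam(G) \leq 2$.

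With this dictionary I would prove the biconditional in two directions. Forward: if $G$ is diameter-$2$ edge-critical, then $\diam(G) = 2$ is not $>2$, so $\gamma_{t}(G^{c}) > 2$; moreover each edge $e = xy \in E(G)$ is exactly a missing edge of $G^{c}$ and $(G - e)^{c} = G^{c} + e$, so $\diam(G - e) > 2$ translates to $\gamma_{t}(G^{c} + e) = 2$. Conversely, assume the two complement conditions \textbf{(A)} $\gamma_{t}(G^{c}) > 2$ and \textbf{(B)} $\gamma_{t}(G^{c} + e) = 2$ for every missing edge $e$ of $G^{c}$. Condition (A) gives $\diam(G) \leq 2$, and applying the dictionary to each $e$ gives $\diam(G - e) > 2$ for every $e \in E(G)$, which is edge-criticality. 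The only gap is upgrading $\diam(G) \leq 2$ to $\diam(G) = 2$: I would rule out $\diam(G) \leq 1$ by noting that a complete $G$ forces $G^{c}$ edgeless, whence adding a single edge to $G^{c}$ leaves an isolated vertex and keeps $\gamma_{t} = \infty \neq 2$ (for $n \geq 3$), contradicting (B); the degenerate orders $n \leq 2$ (where $G = K_{2}$ formally satisfies the complement conditions) are treated separately.

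For the ``that is'' clause I would classify the complements satisfying (A) and (B) by the value $k = \gamma_{t}(G^{c}) \in \{3, 4, \dots\} \cup \{\infty\}$. If $k$ is finite, then (B) says every added edge drops $\gamma_{t}$ by $k - 2$; the Haynes \etal bound (an added edge decreases $\gamma_{t}$ by at most two) forces $k \leq 4$. The case $k = 3$ is a decrease by one on every missing edge, i.e. $G^{c}$ is total domination edge critical with $\gamma_{t} = 3$; the case $k = 4$ is a decrease by two on every missing edge, i.e. $G^{c}$ is $4$-supercritical; and each of these two families visibly satisfies (A) and (B) in reverse. The remaining case $k = \infty$ means $G^{c}$ has an isolated vertex, and here I would pin down $G^{c} = K_{1} \cup K_{n-1}$: a missing edge lying entirely off the isolated vertex would, after addition, still leave that vertex isolated and hence $\gamma_{t} = \infty$, so no such missing edge exists (the non-isolated part is complete), while two isolated vertices $u, v$ would yield, after adding $uv$, a $K_{2}$-component that cannot be totally dominated together with the rest by any two vertices (so there is exactly one isolated vertex).

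I expect the main obstacle to be the bookkeeping in this last case---forcing exactly $K_{1} \cup K_{n-1}$ rather than merely ``a graph with an isolated vertex''---together with the careful handling of the small orders $n \leq 2$; everything else is a direct application of the dictionary derived from \autoref{DominationEdgeVsDiameterTwo} and of the Haynes \etal decrease bound.
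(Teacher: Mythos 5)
Your proposal is correct and follows essentially the same route the paper takes implicitly: the corollary is stated without a written proof, as a direct consequence of \autoref{DominationEdgeVsDiameterTwo} (the Hanson--Wang dictionary between domination by two adjacent vertices and complement diameter), the Haynes \etal bound that adding an edge decreases $\gamma_{t}$ by at most two, and the convention $\gamma_{t}=\infty$ for graphs with isolated vertices---which is exactly the skeleton you flesh out, including the split into $\gamma_{t}(G^{c})\in\{3,4,\infty\}$ yielding the three families. Your explicit handling of the degenerate orders $n\leq 2$ (where $K_{2}$ formally satisfies the complement conditions) is a minor refinement that the paper silently glosses over.
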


The complement of $G$ is $K_{1} \cup K_{n-1}$ if and only if $G$ is $K_{1, n-1}$. Clearly, the Murty-Simon Conjecture holds for $K_{1, n-1}$.

The $4$-supercritical graphs are characterized in \cite{MR1676474}.

\begin{theorem}
A graph $H$ is $4$-supercritical if and only if $H$ is the disjoint union of two nontrivial complete graphs. 
\end{theorem}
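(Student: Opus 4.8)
The plan is to prove both implications, with essentially all of the work in the ``only if'' direction. For the easy direction, suppose $H = K_{a} \cup K_{b}$ with $a, b \geq 2$. Since $H$ has no edges between the two cliques, a set is a TDS of $H$ exactly when it restricts to a TDS of each component, so $\gamma_{t}(H) = \gamma_{t}(K_{a}) + \gamma_{t}(K_{b}) = 2 + 2 = 4$. Every missing edge of $H$ joins a vertex $u$ of $K_{a}$ to a vertex $v$ of $K_{b}$; in $H + uv$ the vertices $u$ and $v$ are adjacent, $u$ is adjacent to all of $K_{a}$ and to $v$, and $v$ is adjacent to all of $K_{b}$ and to $u$, so together $\{u, v\}$ totally dominates $H+uv$ and $\gamma_{t}(H + uv) = 2 = 4 - 2$. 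Hence $H$ is $4$-supercritical.

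For the converse, assume $H$ is $4$-supercritical. First I would record two reductions: $\gamma_{t}(H) = 4$ is finite, so $H$ has no isolated vertex, and $H$ is not complete (otherwise $\gamma_{t}(H) = 2$), so $H$ has at least one missing edge. The crucial step, and the main obstacle, is the claim that for every missing edge $uv$ the pair $\{u, v\}$ itself is a TDS of $H + uv$; equivalently $N_{H}(u) \cup N_{H}(v) \supseteq V(H) \setminus \{u, v\}$. To prove it, take any $2$-element TDS $\{x, y\}$ of $H + uv$, which exists since $\gamma_{t}(H + uv) = 2$. Because $\gamma_{t}(H) = 4 > 2$, this set is not a TDS of $H$, so it must use the new edge and hence meet $\{u, v\}$; say $u \in \{x, y\}$. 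If the other vertex is not $v$, write $\{x, y\} = \{u, y\}$; then $u$ is adjacent to $y$ in $H$, the new edge is needed only to dominate $v$, and $v$ has no neighbour in $\{u, y\}$ in $H$. Choosing any neighbour $z$ of $v$ in $H$ (it exists, and $z \notin \{u, y\}$), the set $\{u, y, z\}$ would be a TDS of $H$ of size $3$, contradicting $\gamma_{t}(H) = 4$. Hence $\{x, y\} = \{u, v\}$, proving the claim.

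With the claim in hand I would extract a second condition from $\gamma_{t}(H) = 4$: for every missing edge $uv$ one has $N_{H}(u) \cap N_{H}(v) = \emptyset$, since a common neighbour $c$ would make $\{u, v, c\}$ a $3$-element TDS of $H$. Combining the two conditions, for every missing edge $uv$ the neighbourhoods $N_{H}(u)$ and $N_{H}(v)$ partition $V(H) \setminus \{u, v\}$. Passing to the complement, this says precisely that $H^{c}$ is triangle-free and that for every edge $uv$ of $H^{c}$ one has $N_{H^{c}}(u) \cup N_{H^{c}}(v) = V(H^{c})$. Finally I would show such a graph is complete bipartite: fixing an edge $uv$ of $H^{c}$, triangle-freeness makes $X = N_{H^{c}}(u)$ and $Y = N_{H^{c}}(v)$ disjoint independent sets covering $V(H^{c})$, so $H^{c}$ is bipartite with parts $X$ and $Y$; applying the union condition to an edge $ux$ with $x \in X$ forces $N_{H^{c}}(x) \supseteq Y$, so every vertex of $X$ is joined to every vertex of $Y$. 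Thus $H^{c} = K_{|X|, |Y|}$ and $H = K_{|X|} \cup K_{|Y|}$, and the absence of isolated vertices in $H$ gives $|X|, |Y| \geq 2$.

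I expect the main difficulty to be the claim that the reducing pair can be taken to be $\{u, v\}$ itself, that is, ruling out an ``asymmetric'' minimum TDS $\{u, y\}$ of $H + uv$; this is exactly where the precise value $\gamma_{t}(H) = 4$, rather than merely $\gamma_{t}(H) > 2$, is used, through the size-$3$ dominating set built from a neighbour of $v$. Once the pair is pinned down, the remaining steps are short, and the passage to the complement converts the total-domination conditions into the clean statement that $H^{c}$ is a complete bipartite graph, which is equivalent to the desired conclusion.
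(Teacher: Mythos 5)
Your proposal is correct. The important point of comparison is that the paper itself offers no proof of this theorem at all: it is stated as a known characterization, cited from van der Merwe, Mynhardt and Haynes \cite{MR1676474}, so there is no internal argument to measure yours against, and what you have written serves as a valid self-contained proof of the quoted result. The two places where the precise hypothesis $\gamma_{t}(H)=4$ (rather than merely $\gamma_{t}(H)>2$) enters are handled correctly: (i) a $2$-element TDS of $H+uv$ meeting $\{u,v\}$ in only one vertex, say $\{u,y\}$, would yield the $3$-element TDS $\{u,y,z\}$ of $H$, where $z$ is any $H$-neighbour of $v$ (it exists because $\gamma_{t}(H)<\infty$ forbids isolated vertices, and $z\notin\{u,y\}$ because $v$ is adjacent to neither $u$ nor $y$); and (ii) a common $H$-neighbour $c$ of $u$ and $v$ would yield the $3$-element TDS $\{u,v,c\}$; both contradict $\gamma_{t}(H)=4$. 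Your preliminary reduction that any $2$-element TDS of $H+uv$ must meet $\{u,v\}$ is also justified, since a $2$-set disjoint from $\{u,v\}$ dominates exactly the same vertices in $H$ as in $H+uv$. Finally, the passage to the complement is sound: disjointness and covering of $N_{H}(u)$, $N_{H}(v)$ for every missing edge $uv$ translate exactly into ``$H^{c}$ is triangle-free and $N_{H^{c}}(u)\cup N_{H^{c}}(v)=V(H^{c})$ for every edge $uv$'', which forces $H^{c}$ to be complete bipartite, hence $H$ is a disjoint union of two cliques, both nontrivial since $H$ has no isolated vertex.
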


The complement of a $4$-supercritical graph is a complete bipartite graph. The Murty-Simon Conjecture holds for the graphs whose complements are $4$-supercritical, \ie complete bipartite graphs.

Therefore, we only have to consider the graphs whose complements are $3$-$\gamma_{t}$-edge critical. 

For $3$-$\gamma_{t}$-edge critical graphs, the bound on the diameter is established in \cite{MR1658130}.
\begin{theorem}
If $G$ is a $3$-$\gamma_{t}$-edge critical graph, then $2 \leq \diam(G) \leq 3$.
\end{theorem}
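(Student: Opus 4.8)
The plan is to argue the lower bound by a direct structural observation and the upper bound by contradiction, using only that $\gamma_{t}(G)=3$ together with the edge-criticality. First I would record two preliminaries. Since $\gamma_{t}(G)=3$ is finite, $G$ has no isolated vertices by convention; moreover $G$ is connected, because a disconnected graph without isolated vertices has at least two nontrivial components, each contributing at least $2$ to the total domination number, which would force $\gamma_{t}(G)\geq 4$. Hence $\diam(G)$ is finite. For the bound $\diam(G)\geq 2$, I note that if $\diam(G)\leq 1$ then $G$ is complete, and a complete graph on at least two vertices has total domination number $2\neq 3$; thus $\diam(G)\geq 2$.

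For the upper bound, suppose toward a contradiction that $\diam(G)\geq 4$, and fix vertices $u,v$ with $d_{G}(u,v)=d\geq 4$ together with a shortest path $u=x_{0},x_{1},x_{2},x_{3},\dots,x_{d}=v$. Since $uv\notin E(G)$, the edge $uv$ is missing, so by edge-criticality $\gamma_{t}(G+uv)\leq 2$; as $G+uv$ still has no isolated vertices, in fact $\gamma_{t}(G+uv)=2$, and I fix a total dominating set $\{a,b\}$ of $G+uv$, so every vertex has a neighbor in $\{a,b\}$. The crucial observation is that adding $uv$ changes only the neighborhoods of $u$ and $v$, so $\{a,b\}$ can fail to be a total dominating set of $G$ only if the pair meets $\{u,v\}$.

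I would then split into cases according to $\{a,b\}\cap\{u,v\}$. If $\{a,b\}\cap\{u,v\}=\emptyset$, then the neighborhoods of $a$ and $b$ are unaffected by the new edge, so $\{a,b\}$ already totally dominates $G$, contradicting $\gamma_{t}(G)=3$. If $\{a,b\}=\{u,v\}$, then the vertex $x_{2}$ satisfies $d_{G}(u,x_{2})=2$ and $d_{G}(v,x_{2})\geq d-2\geq 2$, so $x_{2}$ is adjacent to neither $u$ nor $v$ in $G+uv$, contradicting that $\{u,v\}$ totally dominates. Finally, if exactly one endpoint lies in $\{a,b\}$, say $a=u$ and $b\notin\{u,v\}$, then $ub\in E(G)$ (the new edge joins only $u$ and $v$), so $d_{G}(u,b)=1$; but then the vertex $x_{3}$, which is distinct from $v$ and satisfies $d_{G}(u,x_{3})=3$, must be totally dominated in $G+uv$, i.e.\ adjacent to $u$ or to $b$, and neither is possible, since $d_{G}(u,x_{3})=3$ while a common neighbor $b$ of $u$ and $x_{3}$ would give $d_{G}(u,x_{3})\leq 2$. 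The symmetric subcase $a=v$ is handled the same way using a vertex at distance $3$ from $v$. Each case yields a contradiction, so $\diam(G)\leq 3$.

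I expect the only real subtlety, the main obstacle, to be the bookkeeping in the case where a single endpoint of the missing edge lies in the size-two total dominating set: one must verify that the path vertex $x_{3}$ genuinely exists and is distinct from $v$ (which needs $d\geq 4$, not merely $d\geq 3$) and that its required domination is incompatible with $b$ being a neighbor of $u$. Everything else reduces to short distance estimates along the shortest $u$--$v$ path.
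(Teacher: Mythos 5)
Your proof is correct. There is nothing in the paper to compare it against: the paper states this theorem as a known result imported from Haynes, Mynhardt and van der Merwe \cite{MR1658130} and gives no proof of it, so your argument is necessarily a different (self-contained) route. Checking it on its own merits: the connectivity observation is right (a disconnected graph without isolated vertices has at least two nontrivial components, and the restriction of any total dominating set to a component must totally dominate that component, forcing $\gamma_{t}\geq 4$); the lower bound is immediate from $\gamma_{t}(K_{n})=2$; and the upper bound's three-way case analysis on $\{a,b\}\cap\{u,v\}$ is exhaustive, with correct distance estimates in each case. One step you use silently but legitimately in the third case is that the two vertices of a total dominating set of size two must be adjacent to each other (each must have a neighbor inside the set), which is what gives $ub\in E(G+uv)$ and hence, since $b\neq v$, $ub\in E(G)$. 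You also correctly identify the pivot of the whole argument: when $d=3$ the vertex $x_{3}$ coincides with $v$ and is dominated through the added edge $uv$, so no contradiction arises --- which is exactly why diameter $3$ is attainable and the theorem cannot be improved to $\diam(G)\leq 2$. This is essentially the standard argument in the cited literature, and it is sound.
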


Hanson and Wang \cite{MR1976247}
partition the family of $3$-$\gamma_{t}$-edge critical graphs into
two classes in terms of the diameter:
\[
\mathfrak{G}_{3} = \left \{ G \mid G \mbox{ is a } 3\mbox{-}\gamma_{t}\mbox{-edge critical graph on }n \mbox{ vertices and }\diam(G) = 3 \right\}
\]
\[
\mathfrak{G}_{2} = \left \{ G \mid G \mbox{ is a } 3\mbox{-}\gamma_{t}\mbox{-edge critical graph on }n \mbox{ vertices and }\diam(G) = 2 \right\}
\]
and proved the first part of the Murty-Simon Conjecture for the graphs whose complement are in $\mathfrak{G}_{3}$. Recently, Haynes, Henning, van der Merwe and Yeo \cite{MR2818869} proved the second part for the graphs whose complements are $3$-$\gamma_{t}$-edge critical graphs with diameter three but only with even vertices. Also, Haynes \etal \cite{MR2852589} proved the Murty-Simon Conjecture for the graphs of even order whose complements have vertex connectivity $\ell$, where $\ell = 1, 2, 3$. Haynes, Henning and Yeo \cite{MR2811636} proved the Murty-Simon Conjecture for the graphs whose complements are claw-free.

\begin{remark}%
In the series papers \cite{MR2818869, MR2811636, MR2852589}, the Murty-Simon Conjecture stated by Haynes \etal is not the original conjecture, indeed, it is only for the diameter two edge critical graphs of even order. In this paper, we completely prove the Murty-Simon Conjecture for the graphs whose complements have vertex connectivity $\ell$, where $\ell = 1, 2, 3$.
\end{remark}

Let $G$ be a $3$-$\gamma_{t}$-edge critical graph. Then the addition of any edge $e$ decrease the total domination number to be two, that is, $G + e$ is dominated by two adjacent vertices $x$ and $y$; we call such edge $xy$ {\em quasi-edge} of $e$. Note that $xy$ must contain at least one end of $e$. Clearly, quasi-edge of $e$ may not be unique. If $xy \mapsto w$, then $xy$ is quasi-edge of the missing edge $xw$, and also quasi-edge of missing edge $yw$; conversely, if $xy$ is quasi-edge of a missing edge, then there exists an unique vertex $w$ such that $xy \mapsto w$.

From the definition of $3$-$\gamma_{t}$-edge critical graph, we have the following frequently used observation.
\begin{observation}%
If $G$ is a $3$-$\gamma_{t}$-edge critical graph and $uv$ is a missing edge in it, then either
\begin{enumerate}[(i)]
\item $\{u, v\}$ dominates $G$; or
\item there exists a vertex $z$ such that $uz \mapsto v$ or $zv \mapsto u$.
\end{enumerate}
\end{observation}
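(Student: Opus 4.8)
The plan is to read off the dichotomy directly from total-domination edge-criticality. Since $uv$ is a missing edge and $G$ is $3$-$\gamma_{t}$-edge critical, the addition of $uv$ strictly decreases the total domination number; because $\gamma_{t} \geq 2$ for any graph without isolated vertices and an edge addition can lower $\gamma_{t}$ by at most two, this forces $\gamma_{t}(G+uv)=2$. Hence $G+uv$ admits a total dominating set $\{x,y\}$ of size two, and any such pair must satisfy $xy \in E(G+uv)$, for otherwise $x$ would fail to be totally dominated; that is, $\{x,y\}$ is a quasi-edge of $uv$. I would then split the argument according to whether this quasi-edge is $uv$ itself.

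The crucial remark to isolate first is that passing from $G$ to $G+uv$ alters the adjacency of the single pair $u,v$ only. Consequently, for any fixed pair $\{x,y\}$, the only vertices whose status of being totally dominated by $\{x,y\}$ can differ between $G$ and $G+uv$ are $u$ (and only if $v\in\{x,y\}$) and $v$ (and only if $u\in\{x,y\}$). If $\{x,y\}=\{u,v\}$, then every vertex outside $\{u,v\}$ is adjacent in $G$ to $u$ or to $v$, since the new edge is irrelevant to such vertices, while $u,v\in N_{G}[\{u,v\}]$ trivially; thus $\{u,v\}$ dominates $G$, which is conclusion (i).

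If instead $\{x,y\}\neq\{u,v\}$, then $\{x,y\}$ totally dominates $G+uv$ but not $G$, as $\gamma_{t}(G)=3$, so the status of some vertex genuinely changes, and by the remark this forces exactly one of $u,v$ to lie in $\{x,y\}$. Suppose $v\in\{x,y\}$ and write the pair as $\{z,v\}$ with $z\neq u$; then the vertex whose status changed must be $u$, so $u$ is adjacent in $G$ to neither $z$ nor $v$. I would now verify the four conditions defining $zv\mapsto u$: the edge $zv$ is an edge of $G+uv$ different from $uv$, hence $zv\in E(G)$; we have $zu,vu\notin E(G)$ as just noted; and every vertex other than $u$ is totally dominated by $\{z,v\}$ in $G$, since the added edge affects $u$ alone. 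The symmetric possibility $u\in\{x,y\}$ yields $uz\mapsto v$. Either branch delivers conclusion (ii).

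The verification of the arrow-relation conditions is routine; the step that will need care is the crucial remark itself, namely arguing cleanly that the edge $uv$ changes the total-domination status of no vertex besides $u$ and $v$, and in particular excluding the case in which neither $u$ nor $v$ belongs to the quasi-edge, for then $\{x,y\}$ would already totally dominate $G$, contradicting $\gamma_{t}(G)=3$. This modest bookkeeping is the only real obstacle.
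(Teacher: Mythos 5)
Your proof is correct and follows essentially the same route the paper intends: the paper states this as an immediate observation, relying on the preceding discussion of quasi-edges (adding a missing edge $e$ forces $\gamma_{t}(G+e)=2$, the resulting total dominating pair is adjacent and must contain an end of $e$), which is exactly the case analysis you carry out in detail. Your write-up merely makes explicit the bookkeeping that the paper leaves to the reader.
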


\section{Main results}
The following fundamental result was observed by Hanson and Wang \cite{MR1976247}, also formally written by Haynes \etal \cite{MR2818869}.
\begin{lemma}\label{QuasiClique}
Let $G$ be a graph on $n$ vertices and $(A, B)$ be a partition of its vertex set. If we can associate every missing edge in $A$ or $B$ with an edge in $[A, B]$ and this association is unique in sense that no two missing edges in $A$ or $B$ can associate with one edge in $[A, B]$, then $|E(G^{c})| \leq |A| \times |B| \leq \left\lfloor \frac{n^{2}}{4} \right\rfloor$. Moreover, if there exists an additional edge in $[A, B]$ which is not associate with any missing edge in $A$ or $B$, then $|E(G^{c})| <\left\lfloor \frac{n^{2}}{4} \right\rfloor$. 
\end{lemma}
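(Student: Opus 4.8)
The plan is to prove this by a single direct double-count of the edges of $G^{c}$, organized according to the partition $(A,B)$. The whole content is bookkeeping, so I would set up the accounting carefully and then read off both inequalities.

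First I would classify the edges of $G^{c}$, that is, the missing edges of $G$, into three types: those lying entirely inside $A$, those lying entirely inside $B$, and those with one endpoint in each part. Let $m_{A}$ and $m_{B}$ denote the number of missing edges inside $A$ and inside $B$, respectively. The number of \emph{cross} missing edges is the number of nonadjacent pairs straddling the partition; since there are exactly $|A|\times|B|$ pairs with one endpoint in each part and precisely $|[A,B]|$ of them are edges of $G$, the number of cross missing edges equals $|A|\times|B| - |[A,B]|$. Summing the three types gives the exact identity
\[
|E(G^{c})| = m_{A} + m_{B} + |A|\times|B| - |[A,B]|.
\]

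Next I would invoke the hypothesis. The unique association sends each missing edge inside $A$ or $B$ to an edge of $[A,B]$, with no two missing edges mapped to the same edge; this is exactly an injection from the set of missing edges inside $A$ or $B$ into $[A,B]$, so $m_{A} + m_{B} \leq |[A,B]|$. Substituting into the identity above, the two copies of $|[A,B]|$ cancel and we obtain $|E(G^{c})| \leq |A|\times|B|$. Since $|A| + |B| = n$, the product $|A|\times|B|$ is maximized when the parts are as balanced as possible, giving $|A|\times|B| \leq \left\lfloor \frac{n^{2}}{4} \right\rfloor$ (the standard integer AM--GM bound), which yields the first assertion.

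For the ``moreover'' part, the presence of an edge of $[A,B]$ that is not in the image of the association means the injection is not surjective, so the bound improves to $m_{A} + m_{B} \leq |[A,B]| - 1$. Feeding this into the identity gives $|E(G^{c})| \leq |A|\times|B| - 1$, and combined with $|A|\times|B| \leq \left\lfloor \frac{n^{2}}{4} \right\rfloor$ this forces $|E(G^{c})| < \left\lfloor \frac{n^{2}}{4} \right\rfloor$. I do not expect a genuine obstacle here; the only points requiring care are correctly counting the cross missing edges as $|A|\times|B| - |[A,B]|$ (so that the $|[A,B]|$ terms cancel) and using integrality to turn the strict product inequality into the strict conclusion.
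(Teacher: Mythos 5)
Your proof is correct and complete: the identity $|E(G^{c})| = m_{A} + m_{B} + |A|\times|B| - |[A,B]|$, the injectivity bound $m_{A}+m_{B} \leq |[A,B]|$ (improved to $|[A,B]|-1$ in the ``moreover'' case), and the integrality step $|A|\times|B| \leq \left\lfloor \frac{n^{2}}{4} \right\rfloor$ together give exactly both assertions. The paper itself states this lemma without proof (attributing it to Hanson--Wang and to Haynes et al.), so there is no in-paper argument to compare against; your double-counting argument is the standard one that the paper implicitly relies on, and it fills that omission correctly.
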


The following lemma is extracted from the proof in \cite{MR2818869}, but for the sake
of completeness we present here a full self-contained proof.
\begin{lemma}\label{PartitionEdge} 
Let $G$ be a $3$-$\gamma_{t}$-edge critical graph on $n$ vertices, and $(A, B)$ be a
partition of the vertex set $V(G)$. If, for every missing edge $e$ in $A$ and $B$, there exists a quasi-edge of $e$ in $[A, B]$, then $|E(G^{c})| \leq |A| \times |B| \leq \left\lfloor \frac{n^{2}}{4} \right\rfloor$. Moreover, if $|E(G^{c})| = \left\lfloor \frac{n^{2}}{4} \right\rfloor$, then we have the following properties:
\begin{enumerate}[(i)]
\item For every missing edge $e$ in $A$ and $B$, there exists precisely one quasi-edge of $e$ in $[A, B]$; conversely, for every edge in $[A, B]$, it is the quasi-edge of a missing edge in $A$ or $B$.
\item If $u_{1}, u_{2} \in A$ and $v_{1}, v_{2} \in B$, $\{u_{1}v_{1}, u_{2}v_{2}\} \subseteq E(G^{c})$ and $\{u_{1}v_{2}, u_{2}v_{1}\} \subseteq E(G)$, then $\{u_{1}u_{2}, v_{1}v_{2}\} \subseteq E(G)$.
\item If $u_{1}u_{2}$ is a missing edge in $A$ and $\deg_{B}(u_{1}) \geq \deg_{B}(u_{2})$, then $N_{B}(u_{1}) = N_{B}(u_{2}) \cup \{y\}$, where $y$ is the end (in $B$) of the quasi-edge of $u_{1}u_{2}$. Similarly, if $v_{1}v_{2}$ is a missing edge in $B$ and $\deg_{A}(v_{1}) \geq \deg_{A}(v_{2})$, then $N_{A}(v_{1}) = N_{A}(v_{2}) \cup \{x\}$, where $x$ is the end (in $A$) of the quasi-edge of $v_{1}v_{2}$. Consequently, the missing edges in $A$ (resp. in $B$) form a bipartite graph on $A$ (resp. on $B$).
\end{enumerate}
\end{lemma}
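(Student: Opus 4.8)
The plan is to obtain the cardinality bound directly from \autoref{QuasiClique} and then to read off the equality statements from its ``moreover'' clause. For the bound, I would fix, for each missing edge $e$ in $A$ or $B$, one of its quasi-edges in $[A,B]$ (which exists by hypothesis) and call it $\alpha(e)$; the only thing to verify is that $\alpha$ is injective. The key observation is that a single edge $xy \in [A,B]$, say with $x \in A$ and $y \in B$, determines a unique vertex $w$ with $xy \mapsto w$, and $xy$ is then a quasi-edge of exactly the two missing edges $xw$ and $yw$. Now $xw$ lies inside $A$ only if $w \in A$, while $yw$ lies inside $B$ only if $w \in B$; since $w$ belongs to exactly one of $A,B$, at most one of $xw,yw$ is a missing edge in $A$ or $B$. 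Hence no cross edge is the chosen quasi-edge of two distinct internal missing edges, so $\alpha$ is injective, and \autoref{QuasiClique} yields $|E(G^{c})| \le |A|\times|B| \le \lfloor n^{2}/4\rfloor$.

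For property (i) I would assume $|E(G^{c})| = \lfloor n^{2}/4\rfloor$. The ``moreover'' part of \autoref{QuasiClique} forbids any cross edge that is left unassociated, so $\alpha$ is also surjective, hence a bijection. Using once more that each cross edge serves at most one internal missing edge, I would argue that if some internal missing edge $e$ had two quasi-edges $f_{1}\ne f_{2}$ in $[A,B]$, then $f_{2}$, being a quasi-edge only of $e$ among internal missing edges, would force $\alpha^{-1}(f_{2}) = e = \alpha^{-1}(f_{1})$, contradicting bijectivity. This delivers both halves of (i): exactly one quasi-edge per internal missing edge, and every cross edge is a quasi-edge of one.

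Working now under the equality hypothesis (so (i) is available and every cross edge is a quasi-edge), for (ii) I would suppose $v_{1}v_{2}\notin E(G)$. The cross edge $u_{1}v_{2}$ satisfies $u_{1}v_{2}\mapsto w$ for a unique $w$; if $v_{1}\ne w$ then $v_{1}$ is dominated by $\{u_{1},v_{2}\}$, and since $v_{1}$ differs from both and $u_{1}v_{1}\notin E(G)$, this forces $v_{1}v_{2}\in E(G)$, a contradiction. Hence $u_{1}v_{2}\mapsto v_{1}$, making $u_{1}v_{2}$ a quasi-edge of $v_{1}v_{2}$. The symmetric argument on $u_{2}v_{1}$ (using $u_{2}v_{2}\notin E(G)$) gives $u_{2}v_{1}\mapsto v_{2}$, so $u_{2}v_{1}$ is also a quasi-edge of $v_{1}v_{2}$; as $u_{1}\ne u_{2}$ these are two distinct quasi-edges, contradicting (i). Thus $v_{1}v_{2}\in E(G)$, and interchanging the two parts (looking at $u_{1},u_{2}$ and using $u_{1}v_{1},u_{2}v_{2}\notin E(G)$) gives $u_{1}u_{2}\in E(G)$. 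For (iii), let $u_{1}u_{2}$ be a missing edge in $A$ with $\deg_{B}(u_{1})\ge\deg_{B}(u_{2})$. Property (ii) rules out simultaneously having $b\in N_{B}(u_{2})\setminus N_{B}(u_{1})$ and $b'\in N_{B}(u_{1})\setminus N_{B}(u_{2})$ (it would yield $u_{1}u_{2}\in E(G)$), so the two neighbourhoods are nested, and the degree assumption gives $N_{B}(u_{2})\subseteq N_{B}(u_{1})$. The quasi-edge of $u_{1}u_{2}$ is $u_{1}y\mapsto u_{2}$ or $u_{2}y'\mapsto u_{1}$, but the latter would put $y'\in N_{B}(u_{2})\subseteq N_{B}(u_{1})$ while forcing $u_{1}\not\sim y'$, a contradiction; so it is $u_{1}y\mapsto u_{2}$, giving $y\in N_{B}(u_{1})\setminus N_{B}(u_{2})$. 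To see the difference is exactly $\{y\}$, take any $b\in N_{B}(u_{1})\setminus N_{B}(u_{2})$: the cross edge $u_{1}b$ is a quasi-edge of some internal missing edge, say $u_{1}b\mapsto w^{*}$, but $u_{2}$ is adjacent to neither $u_{1}$ nor $b$, so it cannot be dominated by $\{u_{1},b\}$ and must be the exceptional vertex, i.e. $w^{*}=u_{2}$; then $u_{1}b$ is a quasi-edge of $u_{1}u_{2}$, so by the uniqueness in (i), $u_{1}b=u_{1}y$ and $b=y$. Hence $N_{B}(u_{1})=N_{B}(u_{2})\cup\{y\}$, and the $B$-statement is symmetric. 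Consequently $\deg_{B}(u_{1})=\deg_{B}(u_{2})+1$ along every missing edge in $A$, so $\deg_{B}\bmod 2$ properly $2$-colours the graph of missing edges on $A$, which is therefore bipartite (and likewise on $B$).

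The injectivity and bijection bookkeeping are routine; the substance lies in (ii) and in the ``exactly one vertex'' claim of (iii). I expect the main obstacle to be the latter: the nesting of $N_{B}(u_{1})$ and $N_{B}(u_{2})$ and the location of the quasi-edge follow cheaply from (ii), but collapsing the symmetric difference to $\{y\}$ hinges on the slightly delicate point that any cross edge from $u_{1}$ to a non-neighbour of $u_{2}$ is forced, by the undominated vertex $u_{2}$, to be a quasi-edge of $u_{1}u_{2}$, so that the uniqueness in (i) can be brought to bear.
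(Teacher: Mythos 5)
Your proposal is correct and follows essentially the same route as the paper: injectivity of the association via the unique vertex $w$ with $xy \mapsto w$ (and the observation that $w$ lies in only one of $A$, $B$), then \autoref{QuasiClique} for the bound, the two-quasi-edge contradiction for (ii), and the nesting-plus-uniqueness argument with the parity $2$-colouring for (iii). The only difference is that you spell out details the paper leaves implicit (notably the derivation of (i) from the equality case and the step forcing $w^{*}=u_{2}$), which is a matter of exposition rather than of method.
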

\begin{proof}%
Suppose that $uv$ is a missing edge in $A$, by the
hypothesis, without loss of generality, there exists an edge $uw$ in $[A, B]$ such that $uw \mapsto v$. Clearly, $v$ is not
dominated by $\{u, w\}$, and thus for any missing edge $e \neq uv$ in $A$ or $B$, the edge $uw$ is not a quasi-edge of $e$. Hence, for distinct missing edges $e$ and
$e'$ in $A$, they have no common quasi-edges in $[A, B]$.
Similarly, for distinct missing edges $e$ and $e'$ in $B$, they have no common quasi-edges in $[A, B]$.

It is easy to check that for any missing edge $e$ in $A$ and missing edge $e'$ in $B$, they have no common quasi-edges in $[A, B]$. We associate every missing edge in $A$ and $B$ with its quasi-edge in $[A, B]$, by Lemma \ref{QuasiClique}, it follows that $|E(G^{c})| \leq |A| \times |B| \leq \left \lfloor
\frac{n^{2}}{4} \right \rfloor$.

(ii) If $u_{1}u_{2} \notin E(G)$, then both $u_{1}v_{2}$ and $u_{2}v_{1}$ are quasi-edge of $u_{1}u_{2}$, a contradiction. Similarly, we can prove that $v_{1}v_{2} \in E(G)$.

(iii) Let $u_{1}u_{2}$ be a missing edge in $A$. Suppose that $N_{B}(u_{1}) \nsubseteqq N_{B}(u_{2})$ and $N_{B}(u_{2}) \nsubseteqq N_{B}(u_{1})$. Choose a vertex $v_{1} \in N_{B}(u_{2}) \setminus N_{B}(u_{1})$ and a vertex $v_{2}$ in $N_{B}(u_{1}) \setminus N_{B}(u_{2})$, then $\{u_{1}v_{1}, u_{2}v_{2}\} \subseteq E(G^{c})$ and $\{u_{1}v_{2}, u_{2}v_{1}\} \subseteq E(G)$, by (ii), we have $u_{1}u_{2} \in E(G)$, a contradiction. Hence $N_{B}(u_{1}) \supseteq N_{B}(u_{2})$. If $|N_{B}(u_{1}) \setminus N_{B}(u_{2})| \geq 2$, then there are at least two quasi-edge of the missing edge $u_{1}u_{2}$, a contradiction. Therefore, $N_{B}(u_{1}) = N_{B}(u_{2}) \cup \{y\}$. Similarly, we can prove that $N_{A}(v_{1}) = N_{A}(v_{2}) \cup \{x\}$, if $v_{1}v_{2}$ is a missing edge in $B$.

In the graph formed by the missing edges in $A$, one part $X$ is the vertices of degree odd in $B$, and the other part $Y$ is the vertices of degree even in $B$. For any missing edge $uv$, $\deg_{B}(u)$ and $\deg_{B}(v)$ differ by exactly one, so one is odd and the other is even, and hence $uv$ has one end in $X$ and the other in $Y$, then the graph is bipartite. Similarly, the graph formed by the missing edges in $B$ is a bipartite graph.
\end{proof}
To settle the Murty-Simon Conjecture, the remaining graphs to be verified are ones whose complements are in $\mathfrak{G}_{2}$. We show that the conjecture holds if a condition
in terms of independent cut is satisfied.
\begin{theorem}
Let $G$ be a $3$-$\gamma_{t}$-edge critical graph on $n$ vertices with $\delta(G) \geq 3$. If $G$ has an
independent vertex cut of cardinality at least three, then $|E(G^{c})|
< \left \lfloor \frac{n^{2}}{4} \right \rfloor$.
\end{theorem}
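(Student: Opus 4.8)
The plan is to apply \autoref{PartitionEdge} to the partition $A = V(G) \setminus S$ and $B = S$, where $S$ is the given independent vertex cut with $\abs{S} \geq 3$. Since $G - S$ is disconnected, write its components as $C_{1}, \dots, C_{k}$ with $k \geq 2$. The finish is then immediate: once the hypothesis of \autoref{PartitionEdge} is verified, the lemma gives $\abs{E(G^{c})} \leq \abs{A} \times \abs{B} \leq \lfloor n^{2}/4 \rfloor$, and if equality held, property (iii) would force the missing edges lying inside $B = S$ to form a bipartite graph; but $S$ is independent, so these missing edges are exactly the complete graph on $S$, which contains a triangle as $\abs{S} \geq 3$ and hence is not bipartite. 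This contradiction yields the strict inequality, so the whole theorem reduces to checking the partition hypothesis.

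So the real work is to show that every missing edge with both ends in $A$ or both ends in $B$ has a quasi-edge in $[A, B]$. For a missing edge inside $B = S$: since $S$ is independent and $\abs{S} \geq 3$, a pair $\{s_{1}, s_{2}\} \subseteq S$ cannot dominate $G$, because a third vertex $s_{3} \in S$ is adjacent to neither of them; hence the dichotomy for missing edges puts us in case (ii), and any witness $z$ with $s_{1}z \mapsto s_{2}$ satisfies $z \notin S$, so $z \in A$ and the quasi-edge $s_{1}z$ lies in $[A, B]$. For a missing edge $uv$ inside a single component $C_{i}$: the pair $\{u, v\}$ sees nothing in another component $C_{j}$ ($j \neq i$), so again it cannot dominate $G$, and the witness of $uz \mapsto v$ (or $zv \mapsto u$) must dominate $C_{j}$; as no vertex of $C_{i}$ can, the witness lies in $S \subseteq B$, giving a quasi-edge in $[A, B]$. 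The only remaining missing edges inside $A$ are the pairs $cd$ with $c$ and $d$ in distinct components.

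For a cross-component pair $cd$ when $k \geq 3$, I would pick a third component $C_{l}$: neither $\{c, d\}$ nor any pair consisting of $c$ together with a vertex of $c$'s component can dominate $C_{l}$, so $cd$ is not dominated by its endpoints and the witness of $cz \mapsto d$ (or $zd \mapsto c$) is forced into $S$, again producing a quasi-edge in $[A, B]$. This settles every configuration with $k \geq 3$ cleanly, and combined with the previous paragraph it completes the proof whenever $G - S$ has at least three components.

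The main obstacle is the case $k = 2$, where there is no spare component to drive the witness into $S$. Here a cross pair $cd$ fails to yield a crossing quasi-edge precisely when $\{c, d\}$ dominates $G$ (equivalently, $c$ and $d$ have no common neighbor in $G^{c}$), in which case $c$ is adjacent to all of its component, $d$ to all of its component, and $S \subseteq N_{G}(c) \cup N_{G}(d)$; a secondary difficulty is a singleton component, which is where $\delta(G) \geq 3$ enters to force its vertex's neighbors into $S$. I expect to resolve these by exploiting this rigidity: since $\abs{S} \geq 3$, each endpoint-dominated pair should be chargeable injectively to one of the at least three edges from $\{c, d\}$ into $S$ that are not already spoken for as quasi-edges, or else the two-component structure becomes so restrictive (both parts essentially cliques meeting only through $S$) that a direct count exhibits a spare edge of $[A, B]$ and hence, via \autoref{QuasiClique}, the strict inequality. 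Verifying that this charging is globally consistent, and eliminating the degenerate two-component configurations with the help of $\delta(G) \geq 3$, is the crux of the argument.
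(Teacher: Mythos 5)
Your partition $A = V(G)\setminus S$, $B = S$ works cleanly for missing edges inside $S$, for missing edges inside a single component, and for cross-component pairs when $G-S$ has at least three components; up to that point the argument is sound, and your equality analysis (the missing edges inside $S$ form a clique on at least three vertices, contradicting \autoref{PartitionEdge}~(iii)) is the same device the paper uses. But the case of exactly two components is not a deferrable technicality: it is precisely where your choice of partition breaks down, and no charging scheme can repair it. If $G-S$ has two components $C_{1}, C_{2}$, then \emph{every} pair $(c,d) \in C_{1}\times C_{2}$ is a missing edge inside $A$, so $A$ contains at least $|C_{1}|\cdot|C_{2}|$ missing edges, while $[A,B]$ contains at most $|S|\,(n-|S|)$ edges, which is linear in $n$ for bounded $|S|$ (e.g. $3(n-3)$ when $|S|=3$). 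Any application of \autoref{QuasiClique} or \autoref{PartitionEdge} to your partition requires an injective association of all these missing edges with edges of $[A,B]$, which is impossible once both components have size roughly $7$ or more. In particular, your proposed charging of each endpoint-dominated pair ``injectively to one of the at least three edges from $\{c,d\}$ into $S$'' cannot be globally consistent: on the order of $n^{2}/4$ pairs would have to be charged to $O(n)$ edges. A side effect of this count is that in an actual $3$-$\gamma_{t}$-edge critical graph of this shape most cross pairs must dominate $G$, so the configuration you hoped to dismiss as degenerate is in fact the main case.

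The paper escapes this by choosing a different partition, which is the idea your proposal is missing. It first proves an assertion: there exist an independent vertex cut $S$ and a component $K$ of $G-S$ such that $K$ is complete and $[V(K), S]$ is full (the hypothesis $\delta(G)\geq 3$ enters when some vertex $v$ has an independent neighborhood, in which case one takes $S = N_{G}(v)$ and $K=\{v\}$, the degree condition guaranteeing $|S|\geq 3$ for the final clique-versus-bipartite contradiction). It then sets $A = V(K)\cup S$ and $B = V\setminus A$. With this partition the problematic cross-component pairs lie across the partition, i.e. they are crossing non-edges, and crossing non-edges never need quasi-edges --- they are simply absorbed into the bound $|A|\cdot|B|$. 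The only missing edges inside $A$ are those inside $S$ (because $K$ is complete and fully joined to $S$), and every missing edge inside $B$ fails to dominate $K$, which forces its quasi-edge into $[A,B]$. So the fix is not a cleverer charging argument but a structural step that moves one suitably chosen component to the same side as $S$, so that the cross pairs stop being same-side missing edges; establishing that such a component exists is the real content of the paper's proof, and it is absent from yours.
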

\begin{proof}%
First, we prove the following asseration:
\begin{asseration}
There exists an independent vertex cut $S$ and a component $K$ of $G - S$ such that every vertex in $K$ dominates $V(K) \cup S$.
\end{asseration}
\begin{proof}%
If $G$ has a vertex $v$ such that $N_{G}(v)$ is independent, then let $S = N_{G}(v)$ and $K = \{v\}$, we are done. So we may assume that there is no such vertex. Let $S$ be an independent vertex cut of cardinality at least three and $G_{1}$ be a
component of $G - S$, and $G_{2}$ be the union of the other
components. Moreover, by the above argument, we may assume that $|V(G_{1})| \geq 2$ and $|V(G_{2})| \geq 2$.

Assume that there exists a vertex $v \in S$, a vertex $w_{1} \in V(G_{1})$ and a vertex
$w_{2} \in V(G_{2})$ such that $\{vw_{1}, vw_{2}\} \subseteq E(G^{c})$.
Since $\{w_{1}, w_{2}\}$ does not dominate $v$, by \autoref{TDSCAP}, there exists a vertex $w$ such that either $w_{1}w$ is an edge and $w_{1}w \mapsto w_{2}$
or $ww_{2}$ is an edge and $ww_{2} \mapsto w_{1}$. In the former case, $w \in V(G_{1})$, but $\{w_{1}, w\}$ does not dominate $V(G_{2}) \setminus \{w_{2}\}$ for $|V(G_{2})| \geq 2$, a contradiction; a similar contradiction
can be obtained for the latter case. Therefore, for every vertex $v$ in $S$, it dominates $G_{1}$ or $G_{2}$.

Suppose that $G_{1}$ is not dominated by $v_{1} \in S$ and
$G_{2}$ is not dominated by $v_{2} \in S$. By the previous argument, $v_{1} \neq v_{2}$ and thus $v_{1}$ dominates $G_{2}$ and $v_{2}$ dominates $G_{1}$. Since $\{v_{1}, v_{2}\}$ does not dominate $S\setminus \{v_{1}, v_{2}\}$ (note that $S\setminus \{v_{1}, v_{2}\} \neq \emptyset$ for $S$ is independent and $|S| \geq 3$), by \autoref{TDSCAP}, there exists a vertex $v'$ such that $v_{1}v'$ is an edge and
$v_{1}v' \mapsto v_{2}$ or $v'v_{2}$ is an edge and $v'v_{2} \mapsto v_{1}$. Without loss of generality, assume that
the former case holds. Since $v_{1}$ does not dominates $G_{1}$,
the vertex $v'$ must be in $V(G_{1})$ and $v'v_{2} \notin E(G)$, which contradicts the fact that $v_{2}$ dominates $G_{1}$. Therefore, without loss of generality, we may assume that $G_{1}$ is dominated by every vertex in $S$.

Next, we show that $G_{1}$ is complete.
Otherwise, there is a missing edge $u_{1}u_{2}$ in
$V(G_{1})$. Since $\{u_{1}, u_{2}\}$ does not dominate $G_{2}$, 
there exists a vertex $u$ such that $u_{1}u \mapsto u_{2}$ or $uu_{2} \mapsto u_{1}$. Without loss of generality, assume that $u_{1}u \mapsto u_{2}$. We have known $u_{1}u \in E(G)$ and $uu_{2} \notin E(G)$, so $u \in V(G_{1})$. But $\{u_{1}, u\}$ does not dominate $V(G_{2})$, which is a contradiction. Therefore, $G_{1}$ is complete. Let $K = G_{1}$, we complete the proof.
\end{proof}

Let $A = V(K) \cup S$ and $B = V\setminus A$. For any missing edge $xy$ in $A$, indeed $xy$ is a missing edge in $S$ by the asseration. Both $x$
and $y$ dominates $K$, the quasi-edge of $xy$ must
have one end in $B$, \ie its quasi-edges lies in $[A, B]$. For any missing edge $x'y'$ in
$B$, the closed neighborhood of every vertex
in $K$ is contained in $A$, then quasi-edges of $x'y'$ must have one end in $A$. By \autoref{PartitionEdge}, 
\begin{equation}%
|E(G^{c})| \leq |A| \times |B| \leq \left \lfloor \frac{n^{2}}{4} \right \rfloor.
\end{equation}

If $|E(G^{c})| = \left \lfloor \frac{n^{2}}{4} \right \rfloor$, then the missing edges in $A$ form a bipartite graph by Lemma \ref{PartitionEdge}~(iii), but indeed it is a clique with at least three vertices, a contradiction. 
\end{proof}
\begin{theorem}%
If $G$ is a $3$-$\gamma_{t}$-edge critical graph on $n$ vertices and with connectivity one, then $|E(G^{c})| < \left \lfloor \frac{n^{2}}{4} \right\rfloor$.
\end{theorem}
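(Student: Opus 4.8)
The plan is to mimic the partition argument of the preceding theorem, taking the cut vertex $c$ in the role of the independent cut, and then to squeeze out the strict inequality from the edge-criticality of $G$. First I would record that $G$ is connected and has no isolated vertex (otherwise $\gamma_t(G)=\infty$), so connectivity one gives a cut vertex $c$; write $C_1,\dots,C_k$ ($k\ge 2$) for the components of $G-c$. The structural claim playing the role of the Asseration is that some component $K$ satisfies $V(K)\subseteq N_G(c)$, equivalently $V(K)\cup\{c\}$ is a clique. I would prove this by contradiction: if every $C_i$ had a non-neighbour of $c$, pick such $x_1\in V(C_1)$ and $x_2\in V(C_2)$; then $x_1x_2$ is a missing edge, $\{x_1,x_2\}$ cannot dominate $c$, so the missing-edge observation gives a quasi-edge, say $x_1z\mapsto x_2$; since $x_1\not\sim c$ we get $z\in V(C_1)$, but then $\{x_1,z\}$ fails to dominate the nonempty set $V(C_2)\setminus\{x_2\}$, a contradiction. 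The same observation shows that a missing edge inside a component always has a quasi-edge incident with $c$ whose other end is a non-neighbour of $c$; hence a component contained in $N_G(c)$ has no missing edge, so $V(K)\cup\{c\}$ is a clique.

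With $K$ in hand I set $A=V(K)\cup\{c\}$ and $B=V(G)\setminus A$. Since $A$ is a clique it has no missing edge, and for a missing edge in $B$ I copy the earlier reasoning: a dominating pair would have to dominate $V(K)$, whose closed neighbourhoods lie in $A$, which is impossible; hence the relevant quasi-edge has an end in $A$, and since there are no edges between $V(K)$ and $B$ that end must be $c$. Thus every missing edge in $A$ or $B$ has a quasi-edge in $[A,B]$, and \autoref{PartitionEdge} gives $|E(G^c)|\le|A|\,|B|\le\lfloor n^2/4\rfloor$.

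It remains to make the inequality strict. Here $[A,B]$ consists solely of the edges $cv$ with $v\in N_G(c)\cap B$, and such an edge is the quasi-edge of a missing edge in $B$ if and only if $\{c,v\}$ leaves exactly one vertex of $W:=B\setminus N_G(c)$ undominated; by \autoref{QuasiClique} it therefore suffices to exhibit one edge $cv$ for which $\{c,v\}$ leaves at least two vertices undominated. A short argument using the observation shows $W$ is a clique, hence lies in a single component, and that $|W|\ge 2$: if $W=\emptyset$ then $c$ is universal and $\gamma_t(G)=2$, and $|W|=1$ forces the unique far vertex to have a neighbour adjacent to $c$, again producing a total dominating set of size two. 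Consequently, if $B$ meets a component other than the one containing $W$, any neighbour $v$ of $c$ there leaves all of $W$ undominated and we are done; and if $|V(K)|=1$ then $|A|=2$, so $|A|\,|B|=2(n-2)<\lfloor n^2/4\rfloor$ for $n\ge 5$, while the cases $n\le 4$ do not occur.

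The one remaining configuration, which I expect to be the main obstacle, is $|V(K)|\ge 2$ with $B$ a single component in which every neighbour $v$ of $c$ misses exactly one vertex of $W$. Here the structure is rigid: writing $P=N_G(c)\cap B$, both $P$ and $W$ are cliques, the bipartite graph between them is complete apart from a system of single non-edges, and the map sending each $v\in P$ to its unique non-neighbour in $W$ is onto $W$. In this case I would stop counting and invoke edge-criticality directly: choose $\kappa\in V(K)$ and $w\in W$ and add the missing edge $\kappa w$; a case check over the two-element sets shows $G+\kappa w$ still has no total dominating set of size two — domination of $V(K)$ forces the set to meet $V(K)\cup\{c\}$, after which it cannot cover both $P$ and $W$ — so $\gamma_t(G+\kappa w)=3$, contradicting that $G$ is $3$-$\gamma_t$-edge critical. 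Carrying out this final non-criticality check cleanly, and making sure the rigid-case enumeration is exhaustive, is where the real work lies.
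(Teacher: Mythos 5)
Your strategy is entirely different from the paper's, and much more ambitious. The paper disposes of this theorem in three lines: for $\diam(G)=3$ it simply cites \cite{Wang2011}, and for $\diam(G)=2$ it notes that a cut vertex $v$ must be adjacent to every other vertex (two vertices in different components of $G-v$ can only be at distance two through $v$), so $v$ together with any neighbour totally dominates $G$, contradicting $\gamma_{t}(G)=3$; thus the diameter-two case is vacuous. You never split on the diameter and instead attempt a self-contained structural proof that would also cover the diameter-three case without outsourcing it. Most of your structure is sound: the clique component $K$, the partition $A=V(K)\cup\{c\}$, the fact that every quasi-edge of a missing edge in $B$ must be incident with $c$, the cliques $P$ and $W$, and $|W|\geq 2$ all check out. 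But two concrete steps fail as written.

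First, in the case $|V(K)|=1$ you claim $|A|\,|B|=2(n-2)<\left\lfloor n^{2}/4\right\rfloor$ for all $n\geq 5$; this is false at $n=5$, where $2(n-2)=6=\left\lfloor 25/4\right\rfloor$. The configuration $|A|=2$, $|P|=1$, $|W|=2$ (a five-vertex path) is therefore not covered by your counting; it must be killed by criticality instead (adding the edge $\kappa w$ with $w$ the non-neighbour of the unique vertex of $P$ produces $C_{5}$, whose total domination number is still three, so no such critical graph exists). Second, in the main case $|V(K)|\geq 2$ you assert without proof that the map $f$ sending each $v\in P$ to its unique non-neighbour in $W$ is onto $W$, and your final contradiction tacitly relies on this: if you add $\kappa w$ with $w\notin f(P)$, then $\{\kappa,w\}$ \emph{is} a total dominating set of $G+\kappa w$ (there $\kappa$ dominates $A\cup\{w\}$ while $w$ dominates all of $W$, all of $P$ since $w\notin f(P)$, and $\kappa$ via the new edge), so no contradiction arises. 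Ontoness of $f$ does not follow from anything you established --- for $|V(K)|=1$ there are genuine critical graphs of this shape with $f$ not onto --- so the correct move is to choose $w$ in the image $f(P)$, which is nonempty because $P\neq\emptyset$; with that choice your enumeration of adjacent pairs is exhaustive and the contradiction with edge-criticality goes through. With these two repairs (and noting that the $n=5$ exceptional case also succumbs to the criticality argument because $|P|=1$ forces $f$ to be constant), your argument becomes a correct proof, and one with independent value: unlike the paper's, it does not defer the diameter-three case to \cite{Wang2011}.
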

\begin{proof}%
If $\diam(G) = 3$, then we are done in \cite{Wang2011}. So we may assume that $\diam(G) = 2$. If $v$ is a cut vertex of $G$, then $v$ dominates $G$, and hence $v$ and one of its neighbor totally dominates $G$, this contradicts the fact that $\gamma_{t}(G) = 3$. 
\end{proof}

\begin{remark}
For the connectivity $\ell = 2$, Haynes \etal give a proof in \cite{MR2852589}, indeed, their proof covers the graphs of odd order, but they used the result about claw-free case in the proof, so we give a direct proof here.
\end{remark}
\begin{theorem}%
If $G$ is a $3$-$\gamma_{t}$-edge critical graph on $n$ vertices and with connectivity two, then $|E(G^{c})| < \left \lfloor \frac{n^{2}}{4} \right\rfloor$.
\end{theorem}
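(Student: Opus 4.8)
The plan is to follow the architecture of the independent-cut argument above, adapting it to a cut of size exactly two and supplying a new device for the extremal case. As in the connectivity-one theorem, if $\diam(G)=3$ we are done by \cite{Wang2011}, so assume $\diam(G)=2$ and fix a minimum cut $S=\{a,b\}$. First I would show $S$ is independent and dominates $G$. Since $\diam(G)=2$, any two vertices lying in different components of $G-S$ are at distance two and hence have a common neighbour, which must lie in $S$; consequently every vertex of $V\setminus S$ is adjacent to $a$ or to $b$, so $\{a,b\}$ dominates $G$. If $ab\in E(G)$, then $\{a,b\}$ would be a total dominating set consisting of two adjacent vertices, forcing $\gamma_{t}(G)\le 2$, a contradiction; hence $S$ is independent.

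Next I would establish an analogue of the Assertion. Classify each component of $G-S$ by its adjacency pattern to $\{a,b\}$. If a component contained a vertex adjacent only to $a$ but none adjacent only to $b$, then by the distance-two argument $a$ would be adjacent to all of $V\setminus\{a,b\}$, and taking any neighbour $x$ of $b$ would give the adjacent total dominating set $\{a,x\}$, a contradiction; so every component is ``all-both'' or ``mixed.'' Two vertices that are respectively $a$-only and $b$-only but lie in distinct components would have no common neighbour, so at most one component is mixed; and not every component can be all-both (that again forces $\deg(a)=n-2$ and a size-two total dominating set), so exactly one mixed component $M$ exists together with at least one all-both component. Finally an all-both component is complete: a missing edge $u_{1}u_{2}$ in it would require a quasi-edge $\{u_{1},z\}$ whose end $z$ dominates the remaining components, yet $z$ can lie neither in the component (its closed neighbourhood is confined to the component together with $S$), nor in $S$ (it would miss the opposite-only vertices of $M$), nor in another component (non-adjacency to $u_{1}$). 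Let $K$ be a complete all-both component.

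With this $K$, set $A=V(K)\cup S$ and $B=V\setminus A$. Every vertex of $K$ has its closed neighbourhood inside $A$, so for a missing edge in $B$ any quasi-edge has an end in $A$ and hence lies in $[A,B]$, while the only missing edge inside $A$ is $ab$. Applying \autoref{PartitionEdge} gives $|E(G^{c})|\le|A|\times|B|\le\left\lfloor\frac{n^{2}}{4}\right\rfloor$. For the strict inequality I would assume equality and apply \autoref{PartitionEdge}(iii) to the unique internal missing edge $ab$, obtaining $N_{B}(a)\subseteq N_{B}(b)$ or $N_{B}(b)\subseteq N_{B}(a)$. But the mixed component $M$ lies in $B$ and contributes both a vertex of $N_{B}(a)\setminus N_{B}(b)$ and a vertex of $N_{B}(b)\setminus N_{B}(a)$, so neither containment can hold, which is the desired contradiction and yields $|E(G^{c})|<\left\lfloor\frac{n^{2}}{4}\right\rfloor$.

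The main obstacle is the applicability of \autoref{PartitionEdge} to the edge $ab$: its hypothesis demands that $ab$ have a quasi-edge inside $[A,B]$, yet the pair $\{a,b\}$ already totally dominates $G$ once the edge is added, so $ab$ may possess no quasi-edge that is a genuine edge of $G$ across the cut. I expect to dispatch this by a counting argument based on \autoref{QuasiClique}: when $ab$ has no cross quasi-edge, associate only the missing edges of $B$ with their (distinct) quasi-edges, and then exhibit at least two edges of $[A,B]$ that are quasi-edges of nothing --- for instance edges from $a$ or $b$ into a second all-both component, or into $M$ when it carries two $a$-only or two $b$-only vertices --- so that the ``moreover'' clause forces the strict bound. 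The small residual configuration (exactly two components, with $M$ carrying a single $a$-only and a single $b$-only vertex) would then be eliminated by a direct, explicit analysis, which I anticipate to be the most delicate step.
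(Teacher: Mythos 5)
Your structural work is sound and follows the same general architecture as the paper's proof (independent dominating cut, component classification, then edge-counting across a partition via Lemma~\ref{QuasiClique}); your classification is even self-contained and sharper, since it shows a mixed component must contain weak vertices of \emph{both} types. The genuine gap is in the endgame, exactly where you place it. Having conceded that $ab$ may have no quasi-edge in $[A,B]$, so that \autoref{PartitionEdge} is inapplicable, you need two edges of $[A,B]$ that are quasi-edges of no missing edge of $B$; but you never produce them in the configuration you yourself single out (two components, $M$ with one $a$-only vertex $p$ and one $b$-only vertex $q$), deferring it to an unspecified ``direct, explicit analysis.'' Worse, one of your proposed sources of spare edges is false: an edge $ak'$ from $a$ into a second all-both component $K'$ \emph{can} be a quasi-edge of a missing edge of $B$. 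If $q$ is the unique $b$-only vertex, then $ak' \mapsto q$ (here $\{a,k'\}$ dominates $G-q$: the vertex $a$ covers $K$, $K'$ and the $a$-only/both vertices of $M$, while $k'$ covers $b$), so $ak'$ is a quasi-edge of the missing edge $k'q$ --- in fact its only one, so the association of $k'q$ is forced onto $ak'$. Thus the plan as written fails in precisely the extremal cases it must kill.

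The missing idea is simpler than what you attempt: for an $a$-only vertex $p$, the edge $ap$ is \emph{never} a quasi-edge of a missing edge of $B$. Such a missing edge would have to be of the form $pw$ with $ap \mapsto w$, and then $\{a,p\}$ would have to dominate $b \neq w$; but $b$ is adjacent to neither $a$ (the cut is independent) nor $p$ ($p$ is $a$-only). Symmetrically, $bq$ is never a quasi-edge for a $b$-only vertex $q$. Since by your own classification $M$ contains at least one vertex of each type, the two edges $ap, bq \in [A,B]$ always exist. Now associate each missing edge of $B$ with one of its quasi-edges (distinct missing edges get distinct quasi-edges, by the argument in the proof of \autoref{PartitionEdge}), associate $ab$ with $ap$, and observe that $bq$ is associated with nothing; Lemma~\ref{QuasiClique} then gives $|E(G^{c})| < \left\lfloor \frac{n^{2}}{4} \right\rfloor$ uniformly, with no residual case and no need for \autoref{PartitionEdge}(iii) at all. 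This is in essence how the paper concludes: there the quasi-edges of missing edges in $B$ land only on strong vertices, leaving the weak vertices' edges to the cut unused. (As an aside, your classification shows the paper's final subcase --- exactly one weak vertex --- is vacuous, since then one cut vertex dominates $V \setminus \{x,y\}$ and $\gamma_{t}(G) \leq 2$.)
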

\begin{proof}%
If $\diam(G) =3$, then we are done in \cite{Wang2011}. So we may assume that $\diam(G) = 2$. Let $\{x, y\}$ be a minimum vertex cut. A vertex in $G - \{x, y\}$ is called {\em strong} if it joins to both $x$ and $y$, and {\em weak} if it joins to one of $x$ and $y$.

We state the following properties, they are very simple, so we omit their proofs, the readers can also find the proofs in \cite{MR2852589}.
\begin{enumerate}[(i)]%
\item $\{x, y\}$ dominates $G$ and every vertex in $G - \{x, y\}$ is either strong or weak;
\item $x$ and $y$ are nonadjacent;
\item the strong vertices in the same component of $G - \{x, y\}$ forms a clique;
\item there is at most one component of $G - \{x, y\}$ containing weak vertices;
\item the set of weak vertices is a clique;
\item there are precisely two components of $G - \{x, y\}$.
\end{enumerate}

Let $G_{1}$ and $G_{2}$ be the two components of $G - \{x, y\}$. Without loss of generality, assume that all the vertices in $G_{1}$ are strong. Let $A = V(G_{1}) \cup \{x, y\}$ and $B = V(G_{2})$.

The set $\{x, y\}$ is a minimum vertex cut, there are at least two edges in $[A, B]$. If $G_{2}$ is complete, then there are only one missing edge (say $xy$) in $A$ and $B$, and thus $|E(G^{c})| \leq |A| \times |B| - 1 < \left\lfloor \frac{n^{2}}{4} \right \rfloor$, we are done. So we may assume that $G_{2}$ is not complete. Let $uv$ be a missing edge in $G_{2}$. By the previous asserations, assume that $u$ is a strong vertex and $v$ is a weak vertex. Since $\{u, v\}$ does not dominate $G_{1}$, there exists a vertex $w$ such that $uw \mapsto v$ or $wv \mapsto u$. In both cases, the vertex $w$ has to dominate $G_{1}$, it follows that $w \in \{x, y\}$. If $wv \mapsto u$, then $wu \notin E(G)$, a contradiction. Then $uw \mapsto v$ and $uw$ is the quasi-edge of $uv$. Therefore, the quasi-edges of missing edges in $G_{2}$ are between $\{x, y\}$ and the strong vertices of $G_{2}$. If there are at least two weak vertices in $G_{2}$, then $|E(G^{c})| \leq |A| \times |B| -1 < \left \lfloor \frac{n^{2}}{4} \right \rfloor$, we are done. So there exists only one weak vertex, say $v$, in $G_{2}$. Assume that $yv \in E(G)$. Therefore, for any missing edge $uv$ in $B$, $xu$ is the quasi-edge of $uv$. There are two edges $yu, yv$ are not the quasi-edge of any missing edge in $B$, but there exist only one missing edge in $A$, so $|E(G^{c})| \leq |A| \times |B| - 1 < \left \lfloor \frac{n^{2}}{4} \right \rfloor$.
\end{proof}

\begin{theorem}%
If $G$ is a $3$-$\gamma_{t}$-edge critical graph on $n$ vertices and with connectivity 3, then $|E(G^{c})| < \left \lfloor \frac{n^{2}}{4} \right\rfloor$.
\end{theorem}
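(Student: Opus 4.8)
The plan is to follow the scheme already used for connectivity one and two: reduce to the diameter-two case, then exhibit a partition $(A,B)$ to which Lemma~\ref{PartitionEdge} applies. First I would dispose of the case $\diam(G) = 3$ by \cite{Wang2011}, so I may assume $\diam(G) = 2$. Since $\kappa(G) = 3$ forces $\delta(G) \geq 3$, I would next invoke the theorem on independent vertex cuts of cardinality at least three: if $G$ possesses any such cut we are already finished. Hence I may assume $G$ has \emph{no} independent vertex cut of cardinality at least three; in particular every minimum cut of size three contains an edge. Fix a minimum vertex cut $C = \{x, y, z\}$ with, say, $xy \in E(G)$.

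As in the connectivity-two argument I would first record the coarse structure of $G - C$. Using $\diam(G) = 2$ together with \autoref{TDSCAP}, every vertex outside $C$ is adjacent to some vertex of $C$ (otherwise it would share no neighbour with a vertex in another component), so $C$ dominates $G$; moreover, because $xy \in E(G)$ while $\gamma_{t}(G) = 3$, the pair $\{x,y\}$ cannot totally dominate $G$, which already pins down a vertex whose only neighbour in $C$ is $z$. I would then classify the vertices of $G - C$ by their neighbourhoods in $C$ (the analogue of the \emph{strong}/\emph{weak} dichotomy, now with three cut vertices), show that $G - C$ has exactly two components $G_{1}, G_{2}$, and isolate one of them, say $G_{1}$, all of whose vertices are adjacent to every vertex of $C$ and which is therefore a clique. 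This is where most of the work lies: with three cut vertices and at least one edge inside $C$ there are considerably more neighbourhood types than in the two-cut case, and ruling out a third component as well as the mixed types requires repeated use of the missing-edge Observation (for instance, a vertex of $G_{i}$ whose $C$-neighbourhood is a single cut vertex forces that cut vertex to dominate the remaining components, while incompatible singleton types in distinct components contradict $\diam(G) = 2$).

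With $G_{1}$ a clique all of whose vertices see every vertex of $C$, I would set $A = V(G_{1}) \cup C$ and $B = V(G_{2})$. The only missing edges inside $A$ then lie inside $C$, namely among $\{xz, yz\}$; since every vertex of $G_{1}$ dominates $C$ while $C$ separates $G_{1}$ from $G_{2}$, the quasi-edge of any missing edge in $A$ must use its end in $C$ and hence lands in $[A, B]$, and likewise the closed neighbourhood of a vertex of $G_{1}$ is contained in $A$, so the quasi-edge of any missing edge in $B$ also lands in $[A, B]$. Lemma~\ref{PartitionEdge} then yields $|E(G^{c})| \leq |A| \times |B| \leq \left\lfloor \frac{n^{2}}{4} \right\rfloor$.

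To upgrade this to a strict inequality I would exhibit an edge of $[A, B]$ that is the quasi-edge of no missing edge, and then apply the strict part of Lemma~\ref{QuasiClique}. Because $C$ is a minimum cut, each of $x, y, z$ has a neighbour in $G_{2}$ (otherwise the remaining two cut vertices would already separate $G_{1}$ from $G_{2}$, contradicting $\kappa(G)=3$), so $[A,B]$ carries at least three edges incident with $C$, whereas the missing edges inside $A$ number at most two; tracking, exactly as in the connectivity-two proof, which crossing edges are eligible to be quasi-edges of the missing edges of $B$ should leave at least one crossing edge unused, giving $|E(G^{c})| < \left\lfloor \frac{n^{2}}{4} \right\rfloor$. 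The main obstacle, as already indicated, is the structural step of the second paragraph: proving under the single extra hypothesis that the cut is not independent that $G - C$ has exactly two components and that one of them is a clique seen entirely by $C$. The presence of one or more edges inside $C$ multiplies the admissible neighbourhood types and forces a lengthy, though essentially routine, case analysis; once that structure is in hand, the partition bound and the strictness argument proceed as above.
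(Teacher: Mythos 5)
Your outline reproduces the easy parts of the paper's argument (the reduction to $\diam(G)=2$, the choice of $A = V(G_{1}) \cup C$, $B = V(G_{2})$, and the appeal to Lemma~\ref{PartitionEdge}), but the two places where you defer the work are exactly where the proof actually lives, and in both places your plan as stated would fail. First, the structural claim of your second paragraph is not a fact that can be ``shown'': in the paper, the statements that there are exactly two components, that one may take $|V_{1}| = 1$ with $V_{2}$ not a clique, and that the cut spans at least two edges are all \emph{``we may assume''} reductions, whose complementary cases are not contradictions but configurations in which $|E(G^{c})| < \left\lfloor \frac{n^{2}}{4} \right\rfloor$ must be verified directly, each time by building a different partition and locating spare crossing edges (Claims~\ref{S*} and the following two claims occupy most of the proof). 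Calling this ``essentially routine'' does not discharge it. Moreover, your opening reduction via the independent-cut theorem buys only \emph{one} edge inside $C$, and one edge is not enough for your partition step: if $xy \in E(G)$ but $xz, yz \notin E(G)$, then $xy$ itself can satisfy $xy \mapsto z$, so the quasi-edge of the missing edge $xz$ may lie entirely inside $A$; the hypothesis of Lemma~\ref{PartitionEdge}, that every missing edge in $A$ has a quasi-edge in $[A, B]$, then fails. This is precisely why the paper proves the stronger claim that $\{x, y, z\}$ spans at least two edges (by yet another direct partition argument), making $xyz$ a path, so that no quasi-edge of $xz$ can stay inside $C$.

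Second, the strictness step is a genuine gap. In the hardest case (the paper's Case~2, $xz \in E(G)$), the set $A$ is a clique, so there are \emph{no} missing edges in $A$ at all, and your count ``three crossing edges versus at most two missing edges in $A$'' yields nothing: every crossing edge can be the quasi-edge of a missing edge in $B$, i.e.\ $Q_{e} = [A, B]$, which is exactly the equality configuration that has to be excluded. The paper excludes it by a completely different argument: with $|V_{1}| = 1$ one has $|E(G^{c})| \leq |A|\,|B| = 4(n-4)$, which together with the earlier reduction $|V_{2}| \geq 5$ forces $n = 9$ in the equality case; then the bipartite structure of the missing edges in $B$ guaranteed by Lemma~\ref{PartitionEdge}(iii), split by the parity of the number of neighbours in $\{x, y, z\}$, gives at least $|X| + 2|Y| \geq 7$ edges in $[A, B]$ while the missing edges in $V_{2}$ number at most $|X|\cdot|Y| \leq 6$, a contradiction. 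Nothing in your sketch produces or replaces this counting argument, so ``should leave at least one crossing edge unused'' is an unjustified hope rather than a proof.
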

\begin{proof}%
\setcounter{claim}{0}
If $\diam(G) = 3$, then the theorem was proved in \cite{Wang2011}. So we may assume that $\diam(G) = 2$. Let $\{x, y, z\}$ be a minimum vertex cut of $G$.
\begin{claim}%
There are precisely two components of $G-\{x, y, z\}$.
\end{claim}
\begin{proof}
See \cite{MR2852589}.
\end{proof}
Let $G_{1}$ and $G_{2}$ be the two components of $G - \{x, y, z\}$, and let $V_{1}$ and $V_{2}$ be the vertex set of $G_{1}$ and $G_{2}$, respectively.
\begin{claim}\label{VClique}
For any two vertices $v, v'$ in the same component of $G - \{x, y, z\}$, if both $v$ and $v'$ dominate $\{x, y, z\}$, then $vv' \in E(G)$. 
\end{claim}
\begin{proof}%
Suppose that $vv' \notin E(G)$. Since $\{v, v'\}$ does not dominate the other component, there exists a vertex $w$ such that $vw \mapsto v'$ or $wv' \mapsto v$. In order to dominate the other component, $w \in \{x, y, z\}$, but both $v$ and $v'$ dominates $\{x, y, z\}$, a contradiction.
\end{proof}

It is easy to check the following asseration:
\begin{claim}\label{N}
For any vertices $v_{1} \in V_{1}$ and $v_{2} \in V_{2}$, we have either $N_{G}(v_{1}) \cap \{x, y, z\} \neq N_{G}(v_{2}) \cap \{x, y, z\}$ or both $v_{1}$ and $v_{2}$ dominates $\{x, y, z\}$.
\end{claim}

For $i = 1, 2$, let $S_{i}^{*}$ be the set of vertices in $\{x, y, z\}$ which dominates $V_{i}$.

\begin{claim}\label{S*}%
We may assume that $|S_{1}^{*} \cup S_{2}^{*}| = 3$.
\end{claim}
\begin{proof}%
We may assume, on the contrary, that there exists a vertex, say $z$, such that $\{zv_{1}, zv_{2}\} \subseteq E(G^{c})$, where $v_{1} \in V_{1}$ and $v_{2} \in V_{2}$. Since $\deg_{G}(v_{i}) \geq 3$ and $zv_{i} \notin E(G)$, we have $|V_{i}| \geq 2$ for $i=1, 2$. Since $d_{G}(v_{1}, v_{2}) = 2$, $v_{1}$ and $v_{2}$ have a common neighbor, say $x$, in $\{x, y, z\}$. Since $\{v_{1}, v_{2}\}$ does not dominate $z$, there exists a vertex $w$ such that $v_{1}w \mapsto v_{2}$ or $wv_{2} \mapsto v_{1}$. Without loss of generality, assume that $v_{1}w \mapsto v_{2}$.  Since $v_{1}w \in E(G)$ and $wv_{2} \notin E(G)$, it follows that $w = y$ and $yz \in E(G)$ and $y$ dominates $V_{2}$ except $v_{2}$. Since $\diam(G) = 2$ and $\{yv_{2}, zv_{2}\} \subseteq E(G^{c})$, it yields that $x$ dominates $V_{1}$. Hence $xy \notin E(G)$, for otherwise $\{x, y\}$ totally dominates $G$, which is a contradiction. If $uv$ is a missing edge in $V_{1}$, then there exists a vertex $w'$ such that $uw' \mapsto v$ or $w'v \mapsto u$, in both cases, $w'$ dominates $V_{2}$, so $w' = x$, but $\{xu, xv\} \subseteq E(G)$, a contradiction. Therefore, $V_{1}$ is a clique. The vertex $v_{2}$ has only one neighbor $x$ in $\{x, y, z\}$, by Claim~\ref{N}, for any vertex in $V_{1}$, it has one neighbor in $\{y, z\}$, and thus $\{y, z\}$ dominates $V_{1}$. 

Suppose that $v_{2}v'$ is a missing edge in $V_{2}$. Consider $G + v_{1}v'$. Since $\{v_{1}, v'\}$ does not dominate $v_{2}$, there exists a vertex $w^{*}$ such that $v_{1}w^{*} \mapsto v'$ or $w^{*}v' \mapsto v_{1}$. If $w^{*}v' \mapsto v_{1}$, then $w^{*}v_{1} \notin E(G)$ and $w^{*}v' \in E(G)$, it follows that $w^{*} =z$, but $\{z, v'\}$ does not dominate $v_{2}$, a contradiction. So we may assume that $v_{1}w^{*} \mapsto v'$, then $v_{1}w^{*} \in E(G)$ and $w^{*}v_{2} \in E(G)$, so $w^{*} = x$ and $x$ dominates $V_{2}$ except $v'$. Consider $G + v_{2}v'$, there exists a vertex $w$ in $\{x, y, z\}$ such that $v_{2}w \mapsto v'$ or $wv' \mapsto v_{2}$. If $v_{2}w \mapsto v'$, then $wv_{2} \in E(G)$ and $w = x$, but $\{x, v_{2}\}$ does not dominate $y$, a contradiction. If $wv' \mapsto v_{2}$, then $wv' \in E(G)$ and $wv_{2} \notin E(G)$, so $w = y$, but $\{y, v'\}$ does not dominate $x$, a contradiction. Hence we may assume that $v_{2}$ dominates $V_{2}$.

For any missing edge $u'v'$ in $V_{2}$, indeed, it is a missing edge in $N_{G_{2}}(y)$. Consider $G + u'v'$, quasi-edges of $u'v'$ lies in $[\{x\}, V_{2}\setminus \{v_{2}\}]$. 

Let $A = V_{1} \cup \{x, y\}$ and $B = V_{2} \cup \{z\}$. We associate every missing edge $u'v'$ in $V_{2}$ with one of its quasi-edge in $[\{x\}, V_{2}\setminus \{v_{2}\}]$; associate the missing edge in $[\{z\}, V_{2}]$ with edges in $[\{y\}, V_{2}]$; associate the missing edge $xy$ with $yz$. In addition, there is an additional edge $xv_{2}$, therefore, $|E(G^{c})| < \left\lfloor \frac{n^{2}}{4} \right\rfloor$ by Lemma \ref{QuasiClique}.
\end{proof}



\begin{claim}%
We may assume that $|V_{1}| = 1$ and $V_{2}$ is not a clique.
\end{claim}
\begin{proof}
By Claim~\ref{S*}, we have $S_{1}^{*} \cup S_{2}^{*} = \{x, y, z\}$, without loss of generality, assume that $\{x, y\} \subseteq S_{1}^{*}$. We may assume, on the contrary, that $|V_{1}| \geq 2$. Let $v_{1}$ be a neighbor of $z$ in $V_{1}$, hence $v_{1}$ dominates $\{x, y, z\}$. Let $Q$ be the set of vertices in $V_{2}$ which does not dominate $V_{2}$. For any vertex $v \in Q$, since $\{v_{1}, v\}$ does not dominate $V_{2}$, there exists a vertex $w_{v}$ such that $v_{1}w_{v} \mapsto v$ or $w_{v}v \mapsto v_{1}$. If $w_{v}v \mapsto v_{1}$, then $w_{v} \notin \{x, y, z\}$ and $w_{v} \in V_{2}$, but $\{w_{v}, v\}$ does not dominate $V_{1} \setminus \{v_{1}\}$, which is a contradiction. Therefore, for any vertex $v \in Q$, there exists a vertex $w_{v}$ in $\{x, y, z\}$ such that $v_{1}w_{v} \mapsto v$, and thus $w_{v}$ dominates $V_{2}$ except $v$. Note that for distinct vertices $v$ and $v'$ in $Q$, $w_{v}\neq w_{v'}$, therefore, $|Q| \leq 3$. 

If $|Q| = 3$, then $|V_{2}| \geq 4$, for otherwise $V_{2}$ is disconnected, a contradiction. Hence, for every vertex $x' \in V_{2} \setminus Q$, it dominates $V_{2} \cup \{x, y, z\}$, consequently, $\{x, x'\}$ totally dominates $G$, a contradiction.

If $|Q| = 2$, then $|V_{2}| \geq 3$, for otherwise $V_{2}$ is disconnected, a contradiction. Suppose that $z$ dominates $V_{1}$. By Claim~\ref{VClique}, $V_{1}$ is a clique. Without loss of generality, assume that for vertices $v, v' \in Q$, we have $w_{v} = x$ and $w_{v'} = y$. Let $A = V_{1} \cup \{x, y\}$ and $B = V_{2} \cup \{z\}$. We associate the missing edge $vv'$ in $V_{2}$ with $zv_{1}$, associate missing edges between $z$ and $V_{2}$ with edges in $[\{y\}, V_{2}]$. Now, there are at least two edges in $[\{x\}, V_{2}]$ but there are at most one missing edge (say, $xy$) in $V_{1} \cup \{x, y\}$, hence $|E(G^{c})| < \left\lfloor \frac{n^{2}}{4} \right\rfloor$ by Lemma~\ref{QuasiClique}, we are done. So we may assume that $z$ dominates $V_{2}$, then we have $\{w_{v}, w_{v'}\} = \{x, y\}$. Also let $A = V_{1} \cup \{x, y\}$ and $B = V_{2} \cup \{z\}$. For any missing edge $e$ in $V_{1}$, quasi-edges of $e$ lies in $[\{z\}, V_{1}]$, we associate the missing edge $e$ with one of its quasi-edges, associate the missing edge in $V_{2}$ with one edge in $[\{x, y\}, V_{2}]$, associate one edge in $[\{x, y\}, V_{2}]$ with the possible missing edge $xy$. In the final, there are at least $2|V_{2}| - 2 - 2 \geq 2$ additional edges, hence $|E(G^{c})| < \left\lfloor \frac{n^{2}}{4} \right\rfloor$ by Lemma~\ref{QuasiClique}.

Next, we consider the case $|Q| = 0$, \ie $V_{2}$ is a clique. If $z$ dominates $V_{2}$, then $V_{2} \cup \{z\}$ is a clique. For any missing edge in $V_{1}$, its quasi-edges lies in $[\{z\}, V_{1}]$. There are at least two edges in $[\{x, y\}, V_{2}]$ and at most one other missing edge (say $xy$) in $V_{1} \cup \{x, y\}$, then there are at least one additional edge, hence $|E(G^{c})| \leq |V_{1} \cup \{x, y\}| \times |V_{2} \cup \{z\}| -1 < \left\lfloor \frac{n^{2}}{4} \right\rfloor$. If $z$ does not dominate $V_{2}$, then it dominates $V_{1}$ and $V_{1}$ is a clique by Claim~\ref{VClique}. 

Now, if $|V_{1}| \geq 2$, then we may assume that $V_{1}$ is dominated by every vertex in $\{x, y, z\}$, and both $V_{1}$ and $V_{2}$ are clique. Clearly, if $|V_{1}| = 1$, then $V_{1}$ is also dominated by every vertex in $\{x, y, z\}$. In order to prove the claim by contradiction, we may assume that both $V_{1}$ and $V_{2}$ are cliques regardless the size of $V_{1}$. Let $A' = V_{1} \cup \{x, y, z\}$ and $B' = V_{2}$. There are at least three edges between $A'$ and $B'$ since $\{x, y, z\}$ is a minimum vertex cut, and there are at most three missing edges in $A'$. If $|E(G^{c})| = \left\lfloor \frac{n^{2}}{4} \right\rfloor$, then $\{x, y, z\}$ is independent and $|[\{x, y, z\}, V_{2}]| = 3$, the subgraph formed by the missing edges in $A'$ contains a triangle, which contradicts with Lemma \ref{PartitionEdge}.
\end{proof}

For every missing edge in $V_{2}$, we associate it with an unique quasi-edge of it, and denote this set by $Q_{e}$.

\begin{claim}%
We may assume that there are at least two edges in $\{x, y, z\}$.
\end{claim}
\begin{proof}
Suppose that the subgraph induced by $\{x, y, z\}$ has at most one edge, without loss of generality, let $z$ be an isolated vertex in this subgraph. Let $Q$ be the set of vertices in $V_{2}$ which dominates $\{x, y, z\}$. Then $Q$ is a clique by Claim \ref{VClique}. Let $R = V_{2} \setminus Q$, $B = \{z\} \cup R$ and $A = V\setminus B$. 

Let $v$ be an arbitrary vertex in $R$. If $zv \in E(G)$, then $zv \notin Q_{e}$, for otherwise, $v$ has to dominate $\{x, y\}$ and $v \in Q$, a contradiction. If $zv \notin E(G)$, then for every edge in $[\{x, y\}, \{v\}]$ (note that $[\{x, y\}, \{v\}] \neq \emptyset$), it can not belong to $Q_{e}$, for otherwise, $v$ has to dominate $z$ and $zv \in E(G)$, a contradiction. Hence, for any vertex $v$ in $R$, there is at least one edge in $[\{v\}, \{x, y, z\}]$ such that it is not in $Q_{e}$.

For any missing edge $e$ in $R$, quasi-edges of $e$ lies in $[\{x, y\}, R]$. If $xy \notin E(G)$, then $R$ is a clique, moreover, $[Q, R] \neq \emptyset$ since $V_{2}$ is not a clique and $V_{2}$ is connected. So, if $xy \notin E(G)$, then we associate $xy$ with one edge in $[Q, R]$. If $zv$ is a missing edge in $B$, we associate an edge in $[\{x, y\}, \{v\}]\setminus Q_{e}$ with $zv$. We associate the missing edges in $[\{v_{1}\}, Q]$ with edges in $[\{z\}, Q]$. There is an additional edge $zv_{1}$, and hence $|E(G^{c})| < \left\lfloor \frac{n^{2}}{4} \right\rfloor$ by Lemma~\ref{QuasiClique}.
\end{proof}
Let $B = V_{2}$ and $A = V \setminus B$. Without loss of generality, let $xyz$ be a path, $xz$ may be a missing edge.

Since $V_{2}$ is not a clique, $|V_{2}| \geq 2$; moreover, $|V_{2}| \geq 3$ since $V_{2}$ is connected. If $|V_{2}| = 3$, then $V_{2}$ has only one missing edge. But $|[A, B]| \geq 3$, and there are at most two missing edges in $A$ and $B$, so $|E(G^{c})| < \left\lfloor \frac{n^{2}}{4} \right\rfloor$, a contradiction. Obviously, the degree of $y$ is at least four. If $|V_{2}| = 4$, then $|E(G^{c})| = \binom{8}{2} - |E(G)| \leq 28- 13 < \left\lfloor \frac{n^{2}}{4} \right\rfloor$. So we may assume that $|V_{2}| \geq 5$.

{\bf Case 1}. $xz \notin E(G)$.

We may assume that there is at most one edge in $[\{x, y, z\}, V_{2}]$ which is not in $Q_{e}$, for otherwise we associate one of them with the missing edge $xz$, there is at least one additional edge, hence $|E(G^{c})| < \left\lfloor \frac{n^{2}}{4} \right\rfloor$ by Lemma~\ref{QuasiClique}.

The set $\{x, y, z\}$ is a minimum vertex cut, so each of $x$ and $z$ has at least one neighbor in $V_{2}$. Without loss of generality, assume that $xw \in [\{x\}, V_{2}]$ such that it is in $Q_{e}$ and $xw \mapsto w_{1}$. Hence $wz \in E(G)$ since $w$ has to dominate $z$. Suppose further that $wz$ is also in $Q_{e}$ and $wz \mapsto w_{2}$. Note that $w_{1} \neq w_{2}$ and $\{zw_{1}, xw_{2}\} \subseteq E(G)$. But $xw_{2}$ is not in $Q_{e}$ since $\{x, w_{2}\}$ does not dominate $z$, and $zw_{1}$ is also not in $Q_{e}$ since $\{z, w_{1}\}$ does not dominate $x$. Now, there are at least two edges in $[\{x, y, z\}, V_{2}]$ which are not in $Q_{e}$, a contradiction. Hence, $zw \in E(G)$ but $zw \notin Q_{e}$ and every edge in $[\{x, y, z\}, V_{2}] \setminus \{zw\}$ is in $Q_{e}$. By the previous argument, we have $[\{x, z\}, V_{2}] = \{xw, zw\}$. As $\diam(G) = 2$, every vertex in $V_{2}$ has at least one neighbor in $\{x, y, z\}$, so $[\{y\}, V_{2} \setminus \{w\}]$ is full and $[\{y\}, V_{2} \setminus \{w\}] \subseteq Q_{e}$. For every edge $yw'$ in $[\{y\}, V_{2} \setminus \{w\}]$, it is a quasi-edge of missing edge in $V_{2}$, hence $yw'\mapsto w$ and $ww' \notin E(G)$, moreover, $w$ is isolated in $V_{2}$, which is a contradiction.

{\bf Case 2}. $xz \in E(G)$ and hence $A$ is a clique.

We associate every missing edge in $B$ with a unique quasi-edge in $Q_{e}$, hence, $4(n-4) \geq |E(G^{c})|$. If $|E(G^{c})| \geq \left\lfloor \frac{n^{2}}{4} \right\rfloor$, then $n = 9$ and $Q_{e} = [A, B]$ and Lemma \ref{PartitionEdge} (i)--(iii) holds.

For any vertex in $V_{2}$, it does not dominate $V_{2}$, otherwise, chooose a neighbor of it in $\{x, y, z\}$, we obtain a two vertex set totally dominates $G$, a contradiciton. 

Let \[X = \{v \in V_{2} \mid \text{$v$ has an odd number of neighbors in $\{x, y, z\}$}\}\]
and \[Y = \{v \in V_{2} \mid \text{$v$ has an even number of neighbors in $\{x, y, z\}$}\}.\]
Then $X \cup Y = V_{2}$, and the missing edges in $V_{2}$ form a bipartite graph $H$ with bipartition $(X, Y)$ by Lemma \ref{PartitionEdge}. Let $m^{*}=\min\{|X|, |Y|\}$. Since $|Q_{e}| = |[A, B]| \geq |V_{2}| = 5$, there are at least five missing edges in $V_{2}$, \ie $H$ has at least five edges, so $m^{*} = 2$. Hence, there are at least $|X| + 2|Y| \geq |V_{2}| + m^{*} = 7$ edges in $[A, B]$, but there are at most $|X| \times |Y| = 6$ edges in $H$, \ie there are at most 6 missing edges in $V_{2}$, a contradiction.
\end{proof}
\vskip 3mm \vspace{0.3cm} \noindent{\bf Acknowledgments.} The first author was supported by NSFC (11101125), the third
author is supported by the Discovery Grant (144073) of Natural
Sciences and Engineering Research Council of Canada.

\end{document}